\newcommand{\R}{{\mathbb R}}
\newcommand{\Z}{{\mathbb Z}}
\newcommand{\N}{{\mathbb N}}
\newcommand{\Var}{\operatorname{Var}}
\newcommand{\Cov}{\operatorname{Cov}}
\newtheorem{lemma}{Lemma}[section]
\newtheorem{theorem}[lemma]{Theorem}
\newtheorem{proposition}[lemma]{Proposition}
\newtheorem{definition}[lemma]{Definition}
\newtheorem{remark}[lemma]{Remark}
\begin{document}
\title[Change-Point Detection Based on Two-Sample U-Statistics]
{Change-Point Detection under Dependence Based on Two-Sample U-Statistics}
\author[H. Dehling]{Herold Dehling}
\author[R. Fried]{Roland Fried}
\author[I. Garcia]{Isabel Garcia}
\author[M. Wendler]{Martin Wendler}
\today

\address{
Fakult\"at f\"ur Mathematik, Ruhr-Universit\"at Bo\-chum, 
44780 Bochum, Germany}
\email{herold.dehling@rub.de}
\address{Fakult\"at f\"ur Statistik, Technische Universit\"at Dortmund,
44221 Dortmund, Germany}
\email{fried@statistik.tu-dortmund.de}
\address{Fakult\"at f\"ur Mathematik, Ruhr-Universit\"at Bo\-chum, 
44780 Bochum, Germany}
\email{isabel.garciaarboleda@rub.de}
\address{Fakult\"at f\"ur Mathematik, Ruhr-Universit\"at Bo\-chum, 
44780 Bochum, Germany}
\email{martin.wendler@rub.de}
\keywords{Two-sample U-statistics, change-point problems, weakly dependent data.}

\thanks{This research was supported  by the 
Collaborative Research Grant 823, Project C3 {\em Analysis of Structural 
Change in Dynamic Processes}, of the German Research 
Foundation.}
\begin{abstract}
We study the detection of change-points in time series. The classical CUSUM statistic for detection of jumps in the mean is known to be sensitive to outliers. We thus propose a robust test based on the Wilcoxon
two-sample test statistic. The asymptotic distribution of this test can be derived from a functional central limit theorem for two-sample U-statistics. We extend a theorem of Cs\"org\H{o} and Horv\'ath to the case of
dependent data.
\end{abstract}
\maketitle
\tableofcontents

\section{Introduction}
Change-point tests address the question whether a stochastic process is stationary during the entire observation period or not. In the case of independent data, there is a well-developed theory; see the book by Cs\"org\H{o} and Horv\'ath (1997) for an excellent survey. When the data are dependent, much less is known. The CUSUM statistic has been intensely studied, even for dependent data; see again Cs\"org\H{o} and Horv\'ath (1997). The CUSUM test, however, is not robust against outliers in the data. In the present paper, we study a robust test which is based on the two-sample Wilcoxon test statistic. Simulations show that this test outperforms the CUSUM test in the case of heavy-tailed data. 

In order to derive the asymptotic distribution of the test, we study the stochastic process
\begin{equation}
 \sum_{i=1}^{[n\lambda]} \sum_{j=[n\lambda]+1}^n h(X_i,X_j),\; 0\leq \lambda \leq 1,
\label{eq:us-process}
\end{equation}
where 
$h:\R^2\rightarrow \R$ is a kernel function. In the case of independent data, the asymptotic distribution 
of this process has been studied by Cs\"org\H{o} and Horv\'ath (1988). In the present paper, we extend their result to short range dependent data $(X_i)_{i\geq 1}$. Similar results have been obtained for long range dependent data by Dehling, Rooch and Taqqu (2012), albeit with completely different methods.

U-statistics have been introduced by Hoeffding (1948), where the asymptotic normality was established both for the one-sample as well as the two-sample U-statistic in the case of independent data.  The asymptotic distribution of one-sample U-statistics of dependent data was studied by Sen (1963, 1972),
Yoshihara (1976), Denker and Keller (1983, 1985) and by Borovkova, Burton and Dehling (2001) in the so-called non-degenerate case, and by
Babbel (1989) and Leucht (2012) in the degenerate case. For two-sample U-statistics, Dehling and Fried (2012) established the asymptotic normality of 
$\sum_{i=1}^{n_1} \sum_{j=n_1+1}^{n_1+n_2} h(X_i,X_j)$ for dependent data, when $n_1,n_2\rightarrow \infty$. The main theoretical result of the present paper is a functional version of this limit theorem.

In our paper, we focus on data that can be represented as functionals of a mixing process. In this way, we cover most examples from time series analysis, such as ARMA and ARCH processes, but also data from chaotic dynamical systems. For a survey of processes that have a representation as functional of a mixing process, see e.g. Borovkova, Burton and Dehling (2001). Earlier references can be found in Ibragimov and Linnik (1970) and Billingsley (1968).

\section{Definitions and Main Results}

Given the samples $X_1,\ldots,X_m$ and $Y_1,\ldots,Y_n$, and a kernel $h(x,y)$, we define the two-sample U-statistic
\[
  U_{n_1,n_2}:=\frac{1}{n_1\, n_2} \sum_{i=1}^{n_1} \sum_{j=1}^{n_2} h(X_i,Y_j).
\]
More generally, one can define U-statistics with multivariate kernels $h(x_1,\ldots,x_k,y_1,\ldots,y_l)$. In the present paper, for the ease of exposition, we will restrict attention to bivariate kernels $h(x,y)$. The main results, however, can easily be extended to the multivariate case. 

Assuming that $(X_i)_{i\geq 1}$ and $(Y_i)_{i\geq 1}$ are stationary processes with one-dimensional marginal distribution functions $F$ and $G$, respectively, we can test the hypothesis $H:\, F=G$ using the two-sample U-statistic. E.g., the kernel $h(x,y)=y-x$ leads to the U-statistic 
\[
 U_{n_1,n_2} =\frac{1}{n_1\, n_2} \sum_{i=1}^{n_1}\sum_{j=1}^{n_2} (Y_j-X_i)
 =\frac{1}{n_2} \sum_{j=1}^{n_2} Y_j -\frac{1}{n_1} \sum_{i=1}^{n_1} X_i,
\]
and thus to the familiar two-sample Gau\ss -test. Similarly, the kernel $h(x,y)=1_{\{x\leq y \}}$ leads to the U-statistic
\[
 U_{n_1,n_2}=\frac{1}{n_1\, n_2} \sum_{i=1}^{n_1} \sum_{j=1}^{n_2} 1_{\{X_i\leq X_j  \}},
\]
and thus to the 2-sample Mann-Whitney-Wilcoxon test.

In the present paper, we investigate tests for a change-point in the mean of a stochastic process 
$(X_i)_{i\geq 1}$. We consider the model
\[
 X_i=\mu_i+\xi_i, \; i\geq 1,
\]
where $(\mu_i)_{i\geq 1}$ are unknown constants and where $(\xi_i)_{i\geq 1}$ is a stochastic process. We want to test the hypothesis 
\[
 H:\; \mu_1=\ldots=\mu_n
\]
against the alternative
\[
 A:\, \mbox{ There exists } 1\leq k\leq n-1 \mbox{ such that } \mu_1=\ldots =\mu_k\neq \mu_{k+1}
 =\ldots=\mu_n.
\]
Tests for the change-point problem are often derived from 2-sample tests applied to the samples 
$X_1,\ldots,X_k$ and $X_{k+1},\ldots,X_n$, for all possible $1\leq k\leq n-1$. For two-sample tests based on U-statistics with kernel $h(x,y)$, this leads to the test statistic 
$\sum_{i=1}^k \sum_{j=k+1}^n h(X_i,X_j)$, $1\leq k\leq n$, and thus to the processes 
\begin{equation}
  U_n(\lambda)=\sum_{i=1}^{[n\lambda]} \sum_{j=[n\lambda]+1}^n h(X_i,X_j),
 \; 0\leq \lambda \leq 1.
\label{eq:u-proc}
\end{equation}
In this paper, we will derive a functional limit theorem for the processes 
$(U_n(\lambda))_{0\leq \lambda \leq 1}$. Specifically, we will show that under certain technical assumptions on the kernel $h$ and on the process $(X_i)_{i\geq 1}$, a properly centered and renormalized version of 
$(U_n(\lambda))_{0\leq \lambda \leq 1}$ converges to a Gaussian process.

In our paper, we will assume that the process $(\xi_i)_{i\geq 0}$ is weakly dependent. More specifically, we will assume that $(\xi_i)_{i\geq 0}$ can be represented as a functional of an absolutely regular process. 

\begin{definition}
(i) Given a stochastic process $(X_n)_{n\in \Z}$ we denote by $\mathcal{A}^{k}_{l}$  the $\sigma-$algebra generated by $(X_{k},\ldots,X_{l})$. The process is called absolutely regular if 
\begin{equation}
\beta(k)=\sup_{n}\left\{\sup\sum^{J}_{j=1}\sum^{I}_{i=1}|P(A_{i}\cap B_{j})-P(A_{i})P(B_{j})|\right\}
\rightarrow 0,
\end{equation}
where the last supremum is over all finite $\mathcal{A}^{n}_{1}-$measurable partitions $(A_{1},\ldots,A_{I})$ and all finite $\mathcal{A}^{\infty}_{n+k}-$measurable partitions $(B_{1},\ldots,B_{J}).$
\\[1mm]
(ii) The process is called strongly mixing if
\begin{equation}
\alpha(k)=\sup\left\{\left|P(A\cap B)-P(A)P(B)\right|\ \big|A\in\mathcal{A}^{n}_{1},\ B\in\mathcal{A}^{\infty}_{n+k},\ n\in \N \right\}\rightarrow0.
\end{equation}
(iii) The process $(X_{n})_{n\geq 1}$ is called a two-sided functional of an absolutely regular sequence if there exists an absolutely regular process $(Z_{n})_{n\in \Z}$ and a measurable function $f:\R^\Z\rightarrow\R$ such that 
\[
 X_{i}=f((Z_{i+n})_{n\in\Z}).
\] 
Analogously, $(X_{n})_{n\geq 1}$ is called a one-sided functional  if $X_{i}=f((Z_{i+n})_{n\geq 0})$.
\\[1mm]
(iv) The process $(X_{n})_{n\geq 1}$ is called $1$-approximating functional with coefficients $(a_{k})_{k\geq 1}$ if 
\begin{equation}
E\left|X_{i}-E(X_{i}|Z_{i-k},\ldots,Z_{i+k})\right|\leq a_{k}
\end{equation}

\end{definition}

In addition to weak dependence conditions on the process $(X_i)_{i\geq 1}$, the asymptotic analysis of the process (\ref{eq:u-proc}) requires some continuity assumptions on the kernel functions $h(x,y)$. 
We use the notion of $1$-continuity, which was introduced by Borovkova, Burton and Dehling (2001). Alternative continuity conditions have been used by Denker and Keller (1986).

\begin{definition}
The kernel $h(x,y)$ is called $1$-continuous,  if there exists a function $\phi:(0,\infty)\rightarrow (0,\infty)$ with $\phi(\epsilon)=o(1)$ as $\epsilon\rightarrow 0$ such that for all $\epsilon>0$ 
\begin{align}
E(|h(X',Y)-h(X,Y)|_{\{|X-X'|\leq\epsilon\}})\leq\phi(\epsilon)\\
E(|h(X,Y')-h(X,Y)|_{\{|Y-Y'|\leq\epsilon\}})\leq\phi(\epsilon)
\end{align}
for all random variables $X,X',Y$ and $Y'$ having the same marginal distribution as $X$.
\end{definition}

The most important technical tool in the study of U-statistics is Hoeffding's decomposition, originally introduced by Hoeffding (1948). We write
\begin{equation}
 h(x,y)=\theta + h_1(x) +h_2(y) +g(x,y),
\label{eq:h-decomp}
\end{equation}
where the terms on the right-hand side are defined as follows:
\begin{eqnarray*}
\theta &=& Eh(X,Y) \\
h_1(x) &=& Eh(x,Y) -\theta \\
h_2(y) &=& Eh(X,y) -\theta\\
g(x,y) &=& h(x,y)-h_1(x) -h_2(y) -\theta.
\end{eqnarray*}
Here, $X$ and $Y$ are two independent random variables with the same distribution as $X_1$.
Observe that, by Fubini's theorem,
\[
  E(h_1(X))=E(h_2(X))=0.
\]
In addition, the kernel $g(x,y)$ is degenerate in the sense of the following definition.

\begin{definition}
Let $(X_i)_{i\geq 1}$ be a stationary process, and let $g(x,y)$ be a measurable function. We say that $g(x,y)$ is degenerate if
\begin{equation}
E(g(x,X_1))=E(g(X_1,y))=0,
\label{eq:deg}
\end{equation}
for all $x,y\in \R$.
\end{definition}

The following theorem, a functional central limit theorem for two-sample $U$-statistics of dependent data, is the main theoretical result of the present paper.

\begin{theorem}\label{LimitThm}
Let $(X_{n})_{n\geq 1}$ be a $1$-approximating functional with constants $(a_{k})_{k\geq 1}$ of an absolutely regular process with mixing coefficients $(\beta(k))_{k\geq 1}$, satisfying
\begin{equation}
\sum^{\infty}_{k=1}k^2(\beta(k)+\sqrt{a_{k}}+\phi(a_{k}))<\infty,
\label{eq:mix-cond}
\end{equation}
and let $h(x,y)$ be a $1$-continuous bounded kernel.
Then, as $n\rightarrow \infty$, the $D[0,1]$-valued process 
\begin{equation}
T_{n}(\lambda):=\frac{1}{n^{3/2}}\sum^{[\lambda n]}_{i=1}\sum^{n}_{j=[\lambda n]+1}
(h(X_{i},X_{j}) -\theta),\; 0\leq \lambda \leq 1,
\end{equation}
converges in distribution towards a mean-zero Gaussian processes with representation
\begin{equation}
 Z(\lambda)=(1-\lambda)W_{1}(\lambda)+\lambda(W_{2}(1)-W_{2}(\lambda)),\; 0\leq \lambda \leq 1,
\label{eq:z-repr}
\end{equation}
where $(W_{1}(\lambda),W_{2}(\lambda))_{0\leq \lambda \leq 1}$ is a two-dimensional Brownian motion with mean zero and covariance function $\Cov(W_k(s),W_l(t)) = \min(s,t) \sigma_{kl}$, where 
\begin{equation}\label{variance}
\sigma_{kl}=E(h_{k}(X_{0})h_{l}(X_{0}))+2\, \sum_{j=1}^\infty \Cov(h_{k}(X_{0}),h_{l}(X_{j})),\; k,l=1,2.
\end{equation}
\end{theorem}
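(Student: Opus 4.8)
The plan is to apply Hoeffding's decomposition (\ref{eq:h-decomp}) and to show that the two linear terms carry the Gaussian limit, while the degenerate term is asymptotically negligible uniformly in $\lambda$. Substituting $h(X_i,X_j)-\theta = h_1(X_i)+h_2(X_j)+g(X_i,X_j)$ into $T_n(\lambda)$ and performing the inner summation over the index that does not occur in each summand gives the exact identity
\[
T_n(\lambda)=\frac{n-[\lambda n]}{n}\,\frac{1}{\sqrt n}\sum_{i=1}^{[\lambda n]}h_1(X_i)
+\frac{[\lambda n]}{n}\,\frac{1}{\sqrt n}\sum_{j=[\lambda n]+1}^{n}h_2(X_j)+R_n(\lambda),
\]
where $R_n(\lambda)=n^{-3/2}\sum_{i=1}^{[\lambda n]}\sum_{j=[\lambda n]+1}^{n}g(X_i,X_j)$ collects the degenerate part. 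Thus everything reduces to a functional limit theorem for the two linear pieces and a negligibility statement for $R_n$.

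For the linear part, introduce the partial-sum processes $W_k^{(n)}(\lambda)=n^{-1/2}\sum_{i=1}^{[\lambda n]}h_k(X_i)$ for $k=1,2$. Since $h$ is bounded, $h_1$ and $h_2$ are bounded and measurable, and the $1$-continuity of $h$ ensures that the centered sequences $(h_1(X_i))_i$ and $(h_2(X_i))_i$ are themselves $1$-approximating functionals with coefficients controlled by $\phi(a_k)$; this is exactly where the term $\phi(a_k)$ in (\ref{eq:mix-cond}) enters. Under the summability condition (\ref{eq:mix-cond}), the bivariate invariance principle for $1$-approximating functionals of absolutely regular processes yields $(W_1^{(n)},W_2^{(n)})\claw(W_1,W_2)$ in $D[0,1]^2$, with $(W_1,W_2)$ the two-dimensional Brownian motion whose covariance is the long-run covariance (\ref{variance}). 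Because $(n-[\lambda n])/n\to 1-\lambda$ and $[\lambda n]/n\to\lambda$ uniformly, and the map $(w_1,w_2)\mapsto (1-\lambda)w_1(\lambda)+\lambda(w_2(1)-w_2(\lambda))$ is continuous on $D[0,1]^2$, the continuous mapping theorem delivers the representation $Z(\lambda)$ of (\ref{eq:z-repr}) as the limit of the linear part.

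It then remains to prove $\sup_{0\le\lambda\le1}|R_n(\lambda)|\to 0$ in probability, and this is the heart of the argument. Writing $S_{n,k}=\sum_{i=1}^{k}\sum_{j=k+1}^{n}g(X_i,X_j)$, one needs a second-moment bound $E[S_{n,k}^2]\le Cn^2$ uniform in $k$. In the independent case degeneracy (\ref{eq:deg}) makes all cross terms with distinct indices vanish, leaving only the diagonal $\sum_{i,j}E[g(X_i,X_j)^2]=O(n^2)$, so that $E[R_n(\lambda)^2]=O(1/n)$ pointwise. Under dependence the cross terms no longer vanish, but $\Cov(g(X_i,X_j),g(X_{i'},X_{j'}))$ can be estimated by Yoshihara-type covariance inequalities for $1$-approximating functionals (Borovkova, Burton and Dehling 2001), with decay governed by $\beta(\cdot)$, $\sqrt{a_\cdot}$ and $\phi(a_\cdot)$ evaluated at the separation of the index blocks. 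Since the number of index quadruples at a given lag grows like the square of that lag, the weight $k^2$ in (\ref{eq:mix-cond}) is precisely what renders these sums summable and keeps $E[S_{n,k}^2]=O(n^2)$.

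The main obstacle is the final step of upgrading this pointwise bound to uniform negligibility: one must control the increments $S_{n,k}-S_{n,k'}$ and apply a maximal inequality to bound $\max_{1\le k\le n}|S_{n,k}|$, establishing that it is of order $o(n^{3/2})$. I expect this tightness argument for the degenerate U-process — a chaining estimate fed by increment moment bounds that again rely on the $k^2$-weighted covariance inequalities — to be the most delicate part of the proof. Once $\sup_\lambda|R_n(\lambda)|\to 0$ is established, combining it with the convergence of the linear part (converging-together / Slutsky in $D[0,1]$) yields $T_n\claw Z$ and completes the argument.
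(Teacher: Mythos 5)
Your outline does follow the same route as the paper's proof: Hoeffding's decomposition, a bivariate invariance principle for the partial sums of $(h_1(X_i),h_2(X_i))$ combined with the continuous mapping theorem for the linear part, and covariance inequalities plus a maximal inequality for the degenerate remainder. The genuine gap is in the linear part: you invoke ``the bivariate invariance principle for $1$-approximating functionals of absolutely regular processes'' as a citable fact, but no such result was available --- proving it is the central technical work of the paper (Proposition~\ref{invprin}). Moreover, it cannot be obtained cheaply from the known one-dimensional results (the CLT of Borovkova, Burton and Dehling, or the invariance principle of Wooldridge and White) by a Cram\'er--Wold argument. Finite-dimensional convergence of the vector process requires the joint limit of
\[
\frac{1}{\sqrt n}\sum_{j=1}^k\;\sum_{i=[nt_{j-1}]+1}^{[nt_j]}\bigl(a_j h_1(X_i)+b_j h_2(X_i)\bigr)
\]
with coefficients $(a_j,b_j)$ that change from one time block to the next; this is not the partial-sum process of any single functional, so the univariate theory does not apply to it directly. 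Nor does knowing that each fixed combination $ah_1+bh_2$ satisfies a scalar FCLT identify the limit law: matching variances only pins down the sum $\Cov(W_1(s),W_2(t))+\Cov(W_2(s),W_1(t))$, not each cross-covariance separately, and it gives no joint Gaussianity across distinct time blocks. The paper resolves this with a blocking-and-coupling argument: long blocks separated by short gaps are replaced, via the coupling theorem for $1$-approximating functionals (Theorem~\ref{The3}), by independent blocks with the same marginal distributions, after which the one-dimensional CLT (Theorem~\ref{The4}) is applied blockwise; tightness of each coordinate comes from a fourth-moment bound (Lemma~\ref{lem2.24}). This is where the weight $k^2$ in (\ref{eq:mix-cond}) is actually consumed.

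Two smaller inaccuracies, neither fatal. First, your bookkeeping for the degenerate part misplaces the $k^2$: the number of index quadruples whose maximal gap equals $k$ is $O(n^2k)$, not $O(n^2k^2)$ (fix the left index of the pair realizing the gap, $O(n)$ choices, its partner is then determined; the left index of the other pair is free, $O(n)$ choices, and its partner has at most $k$ choices), so the degenerate part needs only the first-power condition $\sum_k k(\beta(k)+\sqrt{a_k}+\phi(a_k))<\infty$, exactly as in Proposition~\ref{prop:deg-conv}. Second, the step you defer as ``the most delicate part'' requires no chaining: from Lemma~\ref{second} one gets the increment bound $P\left(|S_{n,m}-S_{n,k}|\geq \epsilon n^{3/2}\right)\leq C\epsilon^{-2}n^{-2}(m-k)$, which is linear in $m-k$ and hence outside the scope of Billingsley's moment inequality (Theorem~\ref{Billing}, which needs exponent strictly larger than one); the paper's fix is the elementary estimate $(m-k)\,n^{-2}\leq (m-k)^{4/3}\,n^{-5/3}$ (valid since $1\leq m-k\leq n$), after which a single application of Theorem~\ref{Billing} gives $P\left(\max_{k}|S_{n,k}|\geq\epsilon n^{3/2}\right)=O(n^{-1/3})$, i.e.\ the uniform negligibility of $R_n$.
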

\begin{remark}{\rm
(i) In the case of i.i.d.\ data, Theorem~\ref{LimitThm} was established by Cs\"org\H{o} and Horv\'ath (1988). In the case of long-range dependent data, weak convergence of the process $(T_n(\lambda))_{0\leq \lambda \leq 1}$ has been studied by Dehling, Rooch and Taqqu (2013) and by Rooch (2012), albeit with a normalization different from $n^{3/2}$. 
\\[1mm]
(ii) Using the representation (\ref{eq:z-repr}), one can calculate the autocovariance function of the process $(Z(\lambda))_{0\leq \lambda \leq 1}$. We obtain 
\begin{multline}
\Cov(Z(\lambda),Z(\mu))=\sigma_{11}[(1-\lambda)(1-\mu)\min\{\lambda,\mu\}]+\sigma_{22}[\lambda\mu(1-\mu-\lambda+\min\{\lambda,\mu\})]\\
+\sigma_{12}[\mu(1-\lambda)(\lambda-\min\{\lambda,\mu\})+\lambda(1-\mu)(\mu-\min\{\lambda,\mu\})].
\end{multline}
(iii) For the kernel $h(x,y)=y-x$, we can analyze the asymptotic behavior of the process $T_n(\lambda)$ 
using the functional central limit theorem (FCLT). Note that, since $X_j-X_i=(X_j-E(X_j))-(X_i-E(X_i))$, we may assume without loss of generality that $X_i$ has mean zero. Then we get the representation
\begin{eqnarray}
T_n(\lambda) &=& \frac{1}{n^{3/2}} \sum_{i=1}^{[n\lambda]} \sum_{j=[n\lambda]+1}^n (X_j-X_i) 
 \nonumber \\
&=& \frac{[n\lambda]}{n}\frac{1}{\sqrt{n}} \sum_{i=1}^n X_i -\frac{1}{\sqrt{n}} \sum_{i=1}^{[n\lambda]} X_i.
\end{eqnarray}
Thus, weak convergence of $(T_n(\lambda))_{0\leq \lambda \leq 1}$ can be derived from the FCLT for the partial sum process $\frac{1}{\sqrt{n}} \sum_{i=1}^{[n\lambda]} X_i$. Such FCLTs have been proved under a wide range of conditions, e.g. for functionals of absolutely regular data.
}
\end{remark}

We finally want to state an important special case of Theorem~\ref{LimitThm}, namely when the kernel is anti-symmetric, i.e. when $h(x,y)=-h(y,x)$. Kernels that occur in  connection with change-point tests usually have this property. For anti-symmetric kernels, the limit process has a much simpler structure; moreover
one can give a simpler direct proof in this case.

\begin{theorem}
Let $(X_{n})_{n\geq 1}$ be a $1$-approximating functional with constants $(a_{k})_{k\geq 1}$ of an absolutely regular process with mixing coefficients $(\beta(k))_{k\geq 1}$, satisfying (\ref{eq:mix-cond}),
and let $h(x,y)$ be a $1$-continuous bounded anti-symmetric kernel.
Then, as $n\rightarrow \infty$, the $D[0,1]$-valued process 
\begin{equation}
T_{n}(\lambda):=\frac{1}{n^{3/2}}\sum^{[\lambda n]}_{i=1}\sum^{n}_{j=[\lambda n]+1}
(h(X_{i},X_{j}) -\theta),\; 0\leq \lambda \leq 1,
\end{equation}
converges in distribution towards the mean-zero Gaussian process
$ \sigma\, W^{(0)} (\lambda),\; 0 \leq \lambda \leq 1$,
where $(W^{0}(\lambda))_{0\leq \lambda \leq 1}$ is a standard Brownian bridge and 
\begin{equation}\label{eq:anti-symm-var}
\sigma^{2} = \Var(h_1(X_1)) +2\sum_{i=2}^\infty \Cov(h_1(X_1),h_1(X_k))
\end{equation}
\label{th:anti-symm}
\end{theorem}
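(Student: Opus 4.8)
The plan is to use the Hoeffding decomposition (\ref{eq:h-decomp}) together with the anti-symmetry of $h$ to reduce the statement to a functional central limit theorem for the partial-sum process of $h_1(X_i)$, plus a negligibility estimate for the degenerate remainder. First I would record the simplifications forced by anti-symmetry. Since the Hoeffding terms are computed with independent copies $X,Y$ of $X_1$, anti-symmetry gives $\theta=Eh(X,Y)=-Eh(Y,X)=-\theta$, hence $\theta=0$; similarly $h_2(y)=Eh(X,y)=-Eh(y,X)=-h_1(y)$. Thus the decomposition collapses to $h(x,y)=h_1(x)-h_1(y)+g(x,y)$, where the degenerate part $g$ is itself anti-symmetric.

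Next I would insert this into $T_n(\lambda)$ and isolate the linear part. Writing $S_k=\sum_{i=1}^k h_1(X_i)$, the double sum over the first-order terms telescopes,
\[
\sum_{i=1}^{[n\lambda]}\sum_{j=[n\lambda]+1}^n\bigl(h_1(X_i)-h_1(X_j)\bigr)=(n-[n\lambda])S_{[n\lambda]}-[n\lambda](S_n-S_{[n\lambda]})=nS_{[n\lambda]}-[n\lambda]S_n,
\]
so the linear contribution equals
\[
L_n(\lambda)=\frac{1}{n^{3/2}}\bigl(nS_{[n\lambda]}-[n\lambda]S_n\bigr)=\frac{S_{[n\lambda]}}{\sqrt n}-\frac{[n\lambda]}{n}\,\frac{S_n}{\sqrt n}.
\]
Because $h$ is bounded and $1$-continuous, $h_1(x)=Eh(x,Y)$ is bounded and $(h_1(X_i))_{i\geq1}$ is again a $1$-approximating functional of the same absolutely regular process, with approximation coefficients controlled by $\phi$ and $(a_k)$. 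Under (\ref{eq:mix-cond}) the partial-sum FCLT for such functionals then yields $n^{-1/2}S_{[n\,\cdot\,]}\Rightarrow\sigma W(\cdot)$ in $D[0,1]$, with $\sigma^2$ the long-run variance (\ref{eq:anti-symm-var}). Since the map $x(\cdot)\mapsto x(\cdot)-(\cdot)\,x(1)$ is continuous on $D[0,1]$, the continuous mapping theorem gives $L_n\Rightarrow\sigma\bigl(W(\cdot)-(\cdot)W(1)\bigr)=\sigma W^{(0)}(\cdot)$, a scaled Brownian bridge. This is precisely why the anti-symmetric case is simpler: the two first-order projections coincide up to sign and merge into a single bridge, rather than producing the two-dimensional object $Z(\lambda)$ of Theorem~\ref{LimitThm}.

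The hard part will be showing that the degenerate remainder
\[
G_n(\lambda)=\frac{1}{n^{3/2}}\sum_{i=1}^{[n\lambda]}\sum_{j=[n\lambda]+1}^n g(X_i,X_j)
\]
is uniformly negligible, i.e.\ $\sup_{0\leq\lambda\leq1}|G_n(\lambda)|\to0$ in probability. Here I would first establish a second-moment bound of order $O(n^2)$ for the numerator, uniformly in the split point: degeneracy of $g$ annihilates the dominant covariance terms, while absolute regularity together with the $1$-approximating property let one bound the covariances $\Cov(g(X_i,X_j),g(X_k,X_l))$ by quantities decaying in the separation of the index sets, the $k^2$-weight in (\ref{eq:mix-cond}) being exactly what makes these contributions summable. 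This gives $G_n(\lambda)=O_P(n^{-1/2})$ pointwise. To upgrade pointwise to uniform control I would combine the variance bound with a maximal inequality over a dyadic grid of values of $\lambda$, controlling the increments $G_n(\mu)-G_n(\lambda)$ by the same degenerate-kernel estimates; this simultaneously delivers tightness of $G_n$ and the vanishing of its supremum.

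Finally I would combine the two pieces: $T_n=L_n+G_n$ with $L_n\Rightarrow\sigma W^{(0)}$ and $\sup_\lambda|G_n(\lambda)|\to0$ in probability, so Slutsky's theorem yields $T_n\Rightarrow\sigma W^{(0)}$ in $D[0,1]$, which is the assertion.
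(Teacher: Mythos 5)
Your proposal is correct and follows essentially the same route as the paper's own direct proof: Hoeffding decomposition with $h_2=-h_1$ forced by anti-symmetry, a partial-sum FCLT for the $1$-approximating functional $(h_1(X_i))$ plus the continuous mapping $x(\cdot)\mapsto x(\cdot)-(\cdot)x(1)$ for the linear part, and uniform negligibility of the degenerate remainder via second-moment/increment bounds combined with a maximal inequality over a grid, finished by Slutsky. The only differences are matters of execution detail (the paper invokes Borovkova--Burton--Dehling and Wooldridge--White explicitly for the FCLT, and uses Billingsley's Theorem 10.2 on the grid $\{i/n\}$ with the exponent-boosting trick $(m-k)/n^2\leq (m-k)^{4/3}/n^{5/3}$ to make the maximal inequality applicable), not of strategy.
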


\section{Application to Change Point Problems}
In this section, we will apply Theorem~\ref{LimitThm} in order to derive the asymptotic distribution of two change-point test statistics. Specifically, we  wish to test the null hypothesis 
\begin{equation}
\label{H0}
H_{0}:\mu_{1}=\ldots=\mu_{n}   
\end{equation}
against the alternative of a level shift at an unknown point in time, i.e.
\begin{equation}
\label{H1}
H_{A}:\mu_{1}=\ldots=\mu_k\neq \mu_{k+1}=\ldots=\mu_{n}, \mbox{ for some } k\in \{1,\ldots,n-1\}.
\end{equation}
We consider the following two test statistics,
\begin{eqnarray}
T_{1,n}&=&\max_{1\leq k<n}\left|\frac{1}{ n^{3/2}}\sum^{k}_{i=1}\sum^{n}_{j=k+1} \left(1_{\{X_{i}<X_{j}\}}-1/2\right)\right|
\label{eq:t1n}\\
T_{2,n}&=&\max_{1\leq k<n}\left|\frac{1}{n^{3/2}}\sum^{k}_{i=1}\sum^{n}_{j=k+1}\left(X_{i}-X_{j}\right)\right|.
\label{eq:t2n}
\end{eqnarray}

\begin{theorem}
Let $(X_{n})_{n\geq 1}$ be a $1$-approximating functional with constants $(a_{k})_{k\geq 1}$ of an absolutely regular process with mixing coefficients $(\beta(k))_{k\geq 1}$, satisfying (\ref{eq:mix-cond}), and assume that $X_1$ has a  distribution function $F(x)$ with bounded density.
Then, under the null hypothesis $H_0$, 
\begin{eqnarray}
\label{UstaW}
T_{1,n}&\rightarrow & \sigma_1 \sup_{0\leq\lambda\leq1}|W^{(0)}(\lambda)| \\
\label{UstaM}
T_{2,n}&\rightarrow & \sigma_2 \sup_{0\leq\lambda\leq1}|W^{(0)}(\lambda)|,
\end{eqnarray}
where $(W^{(0)}(\lambda))_{0\leq \leq \lambda \leq 1}$ denotes the standard Brownian bridge process, and
where
\begin{eqnarray}
 \sigma_1^2&=&\Var(F(X_1))+2\, \sum_{k=2}^\infty \Cov(F(X_1),F(X_k))
\label{eq:var-1} \\
 \sigma_2^2&=&\Var(X_1)+2\, \sum_{k=2}^\infty \Cov(X_1,X_k). 
\label{eq:var-2}
\end{eqnarray}
\label{th:CP-test}
\end{theorem}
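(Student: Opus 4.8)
The plan is to derive both limits from the weak convergence of the underlying $U$-process $T_n(\lambda)$ and then pass to the supremum functional via the continuous mapping theorem. The first observation is that both statistics already have this form: the maximum $\max_{1\le k<n}$ over the integer cut-points equals $\sup_{0\le\lambda\le 1}|T_n(\lambda)|$ for the right-continuous step process $T_n(\lambda)$ (the cut-point $k$ corresponds to $[\lambda n]=k$, and the endpoints $\lambda\in\{0,1\}$ contribute value $0$). Moreover both kernels are anti-symmetric: for $T_{1,n}$ it is the Wilcoxon kernel $h(x,y)=1_{\{x<y\}}$, which satisfies $h(x,y)=-h(y,x)$ whenever $x\ne y$, hence almost surely since $F$ is continuous; for $T_{2,n}$ it is $h(x,y)=x-y$. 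Theorem~\ref{th:anti-symm} is therefore the natural tool, and the remaining work is to verify its hypotheses and to identify the limiting variance through the Hoeffding projection $h_1$.

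For $T_{1,n}$ I would apply Theorem~\ref{th:anti-symm} directly. The kernel $1_{\{x<y\}}$ is bounded and anti-symmetric, so only $1$-continuity needs checking. Now $|1_{\{X'<Y\}}-1_{\{X<Y\}}|$ equals $1$ precisely when $Y$ lies between $X$ and $X'$; on the event $\{|X-X'|\le\epsilon\}$ this forces $Y$ into an interval of length at most $\epsilon$, whose probability is at most $\|f\|_\infty\,\epsilon$ because $F$ has bounded density $f$. Hence $1$-continuity holds with $\phi(\epsilon)=\|f\|_\infty\,\epsilon$, and the summability (\ref{eq:mix-cond}) for this $\phi$ follows from the assumed hypothesis since $a_k\le\sqrt{a_k}$ once $a_k\le 1$. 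It then remains to compute the projection. Under $H_0$ the $X_i$ are identically distributed, so $\theta=P(X<Y)=1/2$ and $h_1(x)=E\,1_{\{x<Y\}}-\theta=(1-F(x))-1/2=1/2-F(x)$. Therefore $\Var(h_1(X_1))=\Var(F(X_1))$ and $\Cov(h_1(X_1),h_1(X_k))=\Cov(F(X_1),F(X_k))$, so the variance (\ref{eq:anti-symm-var}) equals $\sigma_1^2$ in (\ref{eq:var-1}), and Theorem~\ref{th:anti-symm} yields $T_n(\lambda)\claw\sigma_1 W^{(0)}(\lambda)$, giving (\ref{UstaW}).

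The statistic $T_{2,n}$ is where the main obstacle lies: its kernel $h(x,y)=x-y$ is unbounded, so Theorem~\ref{th:anti-symm} does not apply verbatim. I would instead use the exact algebraic identity already recorded in Remark~(iii): the double sum telescopes to $\sum_{i=1}^k\sum_{j=k+1}^n(X_i-X_j)=nS_k-kS_n$, so with $S_m=\sum_{i=1}^m(X_i-\mu)$ and $\mu=EX_1$ (the centering cancels) one gets $T_n(\lambda)=n^{-1/2}S_{[n\lambda]}-([n\lambda]/n)\,n^{-1/2}S_n$. Thus the whole $U$-process is a continuous functional of the partial-sum process $n^{-1/2}S_{[n\lambda]}$, and its weak convergence reduces to the functional central limit theorem for partial sums of a $1$-approximating functional of an absolutely regular sequence, which is available under (\ref{eq:mix-cond}) together with $EX_1^2<\infty$. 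That FCLT gives $n^{-1/2}S_{[n\lambda]}\claw\sigma_2 W(\lambda)$ with $\sigma_2^2=\Var(X_1)+2\sum_{k=2}^\infty\Cov(X_1,X_k)$, whence $T_n(\lambda)\claw\sigma_2(W(\lambda)-\lambda W(1))=\sigma_2 W^{(0)}(\lambda)$. Since the projection of $x-y$ is $h_1(x)=x-\mu$, this $\sigma_2^2$ is exactly (\ref{eq:var-2}), consistent with the general formula.

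In both cases the final step is the continuous mapping theorem applied to $\psi\mapsto\sup_{0\le\lambda\le1}|\psi(\lambda)|$, which is continuous on $D[0,1]$ at every limit lying in $C[0,1]$; since the Brownian bridge has almost surely continuous paths, this passage is legitimate and produces $\sigma_j\sup_{0\le\lambda\le1}|W^{(0)}(\lambda)|$. I expect the two genuinely delicate points to be the verification of $1$-continuity for the Wilcoxon kernel, where the bounded-density hypothesis enters in an essential way, and the recognition that the mean-based statistic must be routed through the partial-sum reduction rather than through the general $U$-statistic limit theorem, because its kernel fails to be bounded.
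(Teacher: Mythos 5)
Your overall route coincides with the paper's: the Wilcoxon statistic is handled through the paper's two-sample U-statistic limit theory with the projection $h_1(x)=\tfrac12-F(x)$ producing the Brownian bridge and the variance (\ref{eq:var-1}), and the CUSUM statistic is reduced, via the telescoping identity, to the partial-sum FCLT rather than to the U-statistic theorems --- which is exactly what the paper does, for the same (there unstated) reason you make explicit: the kernel $x-y$ is unbounded. The only structural difference is that you invoke Theorem~\ref{th:anti-symm} where the paper applies Theorem~\ref{LimitThm} and then observes $h_2=-h_1$, so that $Z(\lambda)$ collapses to $W_1(\lambda)-\lambda W_1(1)$; the two computations are identical in content.

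Two slips should be repaired. First, $1_{\{x<y\}}$ is \emph{not} anti-symmetric off the diagonal: for $x<y$ one has $h(x,y)=1$ while $-h(y,x)=0$. The almost-surely anti-symmetric object is the centered kernel $1_{\{x<y\}}-\tfrac12$, which is what actually appears in $T_{1,n}$ (its $\theta$ equals $0$); you should apply Theorem~\ref{th:anti-symm} to that kernel, or argue as the paper does, using only that $h_2=-h_1$ holds $F$-a.e.\ --- which is all the proof of Theorem~\ref{th:anti-symm} ever uses. Second, your verification of $1$-continuity tacitly assumes $Y$ independent of the pair $(X,X')$: the bound $P\bigl(Y\in(\min(X,X'),\max(X,X')]\bigr)\le\|f\|_\infty\,\epsilon$ can fail badly for a random interval (take $Y$ always lying between $X$ and $X'$, so the expectation is near $1$). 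Read literally, the paper's definition of $1$-continuity quantifies over \emph{all} joint laws with the given marginals, and under that reading the indicator kernel is not $1$-continuous at all; the definition must be understood as in Borovkova, Burton and Dehling (2001), where the admissible joint laws make your bounded-density computation legitimate --- this is why the paper simply cites that reference instead of computing. Finally, for $T_{2,n}$, second moments alone are generally insufficient for an FCLT for functionals of absolutely regular processes (one needs boundedness, or $E|X_1|^{2+\delta}<\infty$ together with correspondingly stronger summability conditions); the paper's own one-line appeal to Billingsley is equally terse here, so this is a gap you share with the paper rather than one you introduce.
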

{\em Proof.} We will establish weak convergence of $T_{1,n}$. In order to do so, we will apply Theorem~\ref{LimitThm} to the kernel $h(x,y)=1_{\{x< y\}}$. Borovkova, Burton and Dehling (2001) showed that this kernel is $1$-continous. By continuity of the distribution function of $X_1$, we get that 
$\theta=P(X<Y)=1/2$. Moreover, we get
\begin{eqnarray*} 
h_1(x)&=& P(x<X_1)-\frac{1}{2}=\frac{1}{2}-F(x) \\
h_2(x)&=& P(X_1 <x) -\frac{1}{2} =F(x)-\frac{1}{2}.
\end{eqnarray*}
Note that $h_2(x) = -h_1(x)$. Hence $W_2(\lambda)=-W_1(\lambda)$, and thus the limit process in Theorem~\ref{LimitThm} has the representation 
\[
 Z(\lambda)=(1-\lambda) W_1(\lambda) +\lambda (W_2(1)-W_2(\lambda))=W_1(\lambda)-\lambda W_1(1).
\]
Here $W_1(\lambda)$ is a Brownian motion with variance $\sigma_1^2$. Weak convergence of $T_{2,n}$ can be shown directly from the functional central limit theorem for the partial sum process; see e.g. Billingsley (1968).
 \hfill $\Box$

\begin{remark}{\rm
(i) The distribution of $\sup_{0\leq \lambda \leq 1} |W(\lambda)|$ is the well-known Kolmogorov-Smirnov distribution. Quantiles of the Kolmogorov-Smirnov distribution can be found in most statistical tables.
\\
(ii) In order to apply Theorem~\ref{th:CP-test}, we need to estimate the variances
$\sigma^2_{1}$ and $\sigma_2^2$.
Regarding $\sigma_2^2$ given in expression (\ref{eq:var-2}), we apply the non-overlapping subsampling estimator
\begin{equation}
\hat{\sigma}_{2}^2=\frac{1}{[n/l_{n}]}\sum^{[n/l_{n}]}_{i=1}\frac{1}{{l_{n}}}\left(\sum^{il_{n}}_{j=(i-1)l_{n}+1}
X_{j}-\frac{l_{n}}{n}\sum^{n}_{j=1} X_{j}\right)^2
\end{equation}
investigated by Carlstein (1986) for $\alpha$-mixing data. In case of AR(1)-processes,
 Carlstein derives
\begin{equation} l_n=\max(\lceil n^{1/3}(2\rho/(1-\rho^2))^{2/3}\rceil,1)\label{eq:adaptblocks}\end{equation}
 as the choice of the block length which minimizes the MSE asymptotically, with $\rho$ being the autocorrelation coefficient at lag 1. \\
Regarding $\sigma_1^2$ given in (\ref{eq:var-1}), one faces the additional challenge that the distribution function $F$ is unknown. This problem has been addressed, e.g. in Dehling, Fried, Sharipov, Vogel and Wornowizki (2013), for the case of functionals of
absolutely regular processes and $F$ being estimated by the empirical distribution function $F_n$. The authors find the subsampling estimator
for $\sigma_1^2$ 
\begin{equation}
\hat{\sigma}_{1}=\frac{1}{[n/l_{n}]}\sqrt{\frac{\pi}{2}}\sum^{[n/l_{n}]}_{i=1}\frac{1}{\sqrt{l_{n}}}\left|\sum^{il_{n}}_{j=(i-1)l_{n}+1} F_n(X_{j})-\frac{l_{n}}{n}\sum^{n}_{j=1} F_n(X_{j})\right|
\end{equation}
employing non-overlapping subsampling to give smaller biases, but somewhat larger MSEs than the corresponding overlapping subsampling estimator. The adaptive choice of the block length $l_n$ proposed by Carlstein worked well in their simulations if the data were generated from a stationary ARMA(1,1) model and an estimate of $\rho$ was plugged in. In the next section, we will explore this and other proposals in situations with level shifts and normally or heavy-tailed innovations.
 }
\end{remark}

\section{Simulation Results}\label{ss31}

The assumptions regarding the underlying process $(X_i)$ in Theorem~\ref{LimitThm} are satisfied by a wide range of  time series, such as AR and ARMA models. To illustrate the results and to investigate the finite sample behavior and the power of the tests based on $T_{1,n}$ and $T_{2,n}$, we will give some simulation results. We study the underlying change-point model
\begin{equation}
 X_{i} = \left\{
\begin{array}{cl}
 \xi_{i}&\mbox{if } i=1,\ldots,[n\lambda]\\
\mu+\xi_{i} &\mbox{if }  i=[n\lambda]+1,\ldots,n.
\end{array}\right.
\end{equation}
Within this model, the hypothesis of no change is equivalent to $\mu=0$.
We assume that the noise follows an AR(1) process, i.e. that
\begin{equation}
\xi_i=\rho\, \xi_{i-1}+\epsilon_i,
\label{eq:AR-noise}
\end{equation}
where  $-1<\rho<1$, and where the innovations $\xi_{i}$ are  i.i.d.  random variables with mean zero.
The innovations $\xi_{i}$ are generated from a standard normal or a $t_\nu$-distribution with $\nu=3$ degrees of freedom, scaled to have the same 84.13\% percentile as the standard normal, which is 1. 
 The autoregression coefficient is varied in $\rho=\{0.0, 0.4, 0.8\}$, corresponding to zero, moderate or strong positive autocorrelation, and the sample size is $n=200$. For the choice of the block length we used Carlstein's adaptive rule outlined above, or a fixed block length
 of $l_n=9$, which is in good agreement with the empirical findings of Dehling et al. (2013) for larger sample sizes and their theoretical result that $l_n$ should be chosen as $o(\sqrt{n})$ to achieve consistency. For the reason of comparison we also included tests employing 
 overlapping subsampling for estimation of the asymptotical variance, applying the same block lengths as the non-overlapping versions. 
 
   Table \ref{tab:size} contains the empirical levels (i.e. the fraction of rejections) of the tests with an asymptotical level of 5\%, obtained from 4000 simulation runs for each situation.
    Note that the tests developed under the assumption of independence, which do not adjust for autocorrelation, become strongly oversized with an increasingly positive autocorrelation, i.e. they reject a true null hypothesis by far too often and are practically useless already for $\rho=0.4$. The performance of the adjusted tests is much better in this respect and in a good agreement with the asymptotical results. Only if the autocorrelation is strong ($\rho=0.8$), the tests with a fixed block length become somewhat anti-conservative (oversized), and even more so for the CUSUM-test. Longer block lengths are needed for stronger positive autocorrelations, and Carlstein's adaptive block length \eqref{eq:adaptblocks} adjusts for this. There is little difference between the tests employing overlapping and non-overlapping subsampling here.

\begin{table}[htb]
\begin{center}
\begin{tabular}{cc|rrrrr|rrrrr}
  &    &  \multicolumn{5}{c|}{$T_{1,n}$} & \multicolumn{5}{c}{$T_{2,n}$}\\ \hline
 && unadj. &  \multicolumn{2}{c}{$l_n$ fixed} & \multicolumn{2}{c|}{adaptive} & unadj. & \multicolumn{2}{c}{$l_n$ fixed} & \multicolumn{2}{c}{adaptive} \\
 $\nu$ &   $\rho$ && ol & nol   & ol & nol&& ol & nol& ol & nol\\ \hline
$\infty$ & 0.0 & 2.8 & 2.0 & 2.9 & 2.0 & 2.2 & 4.5& 2.9& 3.9& 3.7& 3.8\\
$\infty$ & 0.4 & 24.5 & 2.5 & 3.1 & 3.5 & 3.9 &34.2&  3.9&  4.9&  5.5&  6.0\\
$\infty$ & 0.8 &81.6 & 6.2 & 6.5 & 1.9&  2.5& 91.5& 10.5& 10.6&  3.4&  4.0\\
3 & 0.0 & 3.1 &2.2& 2.9 & 2.2 & 2.9& 3.8 & 2.5 & 3.5 & 3.1 &3.1\\
3 & 0.4 & 26.9 & 2.4& 3.0& 3.2& 3.0& 32.0 & 3.3& 3.8 &4.3 &4.9\\
3 & 0.8 & 82.7& 6.9  &7.0&  2.0&  2.8& 90.6 & 10.2& 10.5&  3.2 & 3.9\\
\end{tabular}
\\[2mm]
\caption[Level of Change-Point Tests]{\label{tab:size} Empirical level of the tests based on $T_{1,n}$ and $T_{2,n}$, for $n=200$, with fixed or adaptive subsampling block length $l_n$ and overlapping (ol) or non-overlapping (nol) subsampling. The results are for AR(1) observations with different lag-one autocorrelations $\rho$ and different $t_\nu$-distributed innovations, and based on 4000 simulation runs each.}
\end{center}
\end{table}

In order to investigate the powers of the tests under the alternative, a change in the mean, we consider shifts of increasing height
$\mu$, generating 400 data sets for each situation.
The sample size is again $n=200$ and the change point is after observation number $\tau=[\lambda n]=100$.

Figure \ref{fig:arnormpower} illustrates the powers of the different versions of the tests in case of Gaussian or $t_3$-distributed innovations and several autocorrelation coefficients $\rho$. Under normality, the CUSUM test $T_{2,n}$ is somewhat more powerful than the test $T_{1,n}$ based on the Wilcoxon statistic, while under the $t_3$-distribution it is the other way round. 
The CUSUM test with the fixed block length considered here becomes strongly oversized if $\rho$ is large, while this effect is less severe for the test based on the Wilcoxon statistic.
Carlstein's adaptive choice of the block length increases the power if $\rho$ is small and improves the size of the test substantially if $\rho$ is large. The tests employing overlapping subsampling (not shown here) perform even slightly more powerful in case of zero or moderate autocorrelations, but much less powerful in case of strong autocorrelations.

\begin{figure}[htbp!]\centering
\caption{\label{fig:arnormpower}Power of the tests in case of a shift in the middle of
an AR(1) process with Gaussian (left) and $t_3$-innovations (right) and different lag one correlations
$\rho=0.0$ (top), $\rho=0.4$ (middle) or $\rho=0.8$ (bottom), $n=200$.
 Wilcoxon test $T_{n,1}$ (bold lines) and CUSUM test $T_{n,2}$ (thin lines).
 Adjustment by non-overlapping subsampling with fixed (black) or adaptive block length (grey).  }
\resizebox{15cm}{7cm}{\includegraphics{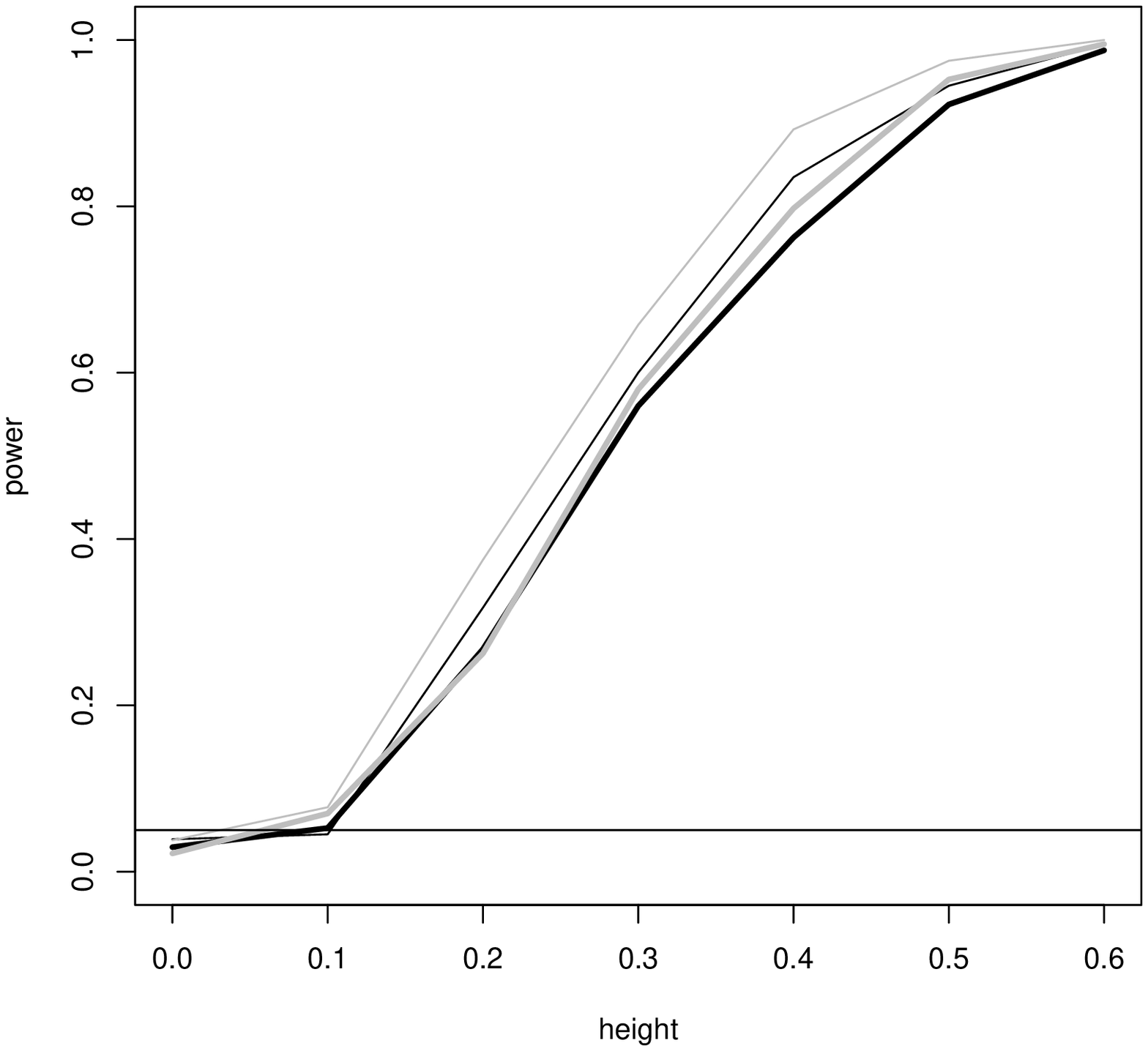}
\includegraphics{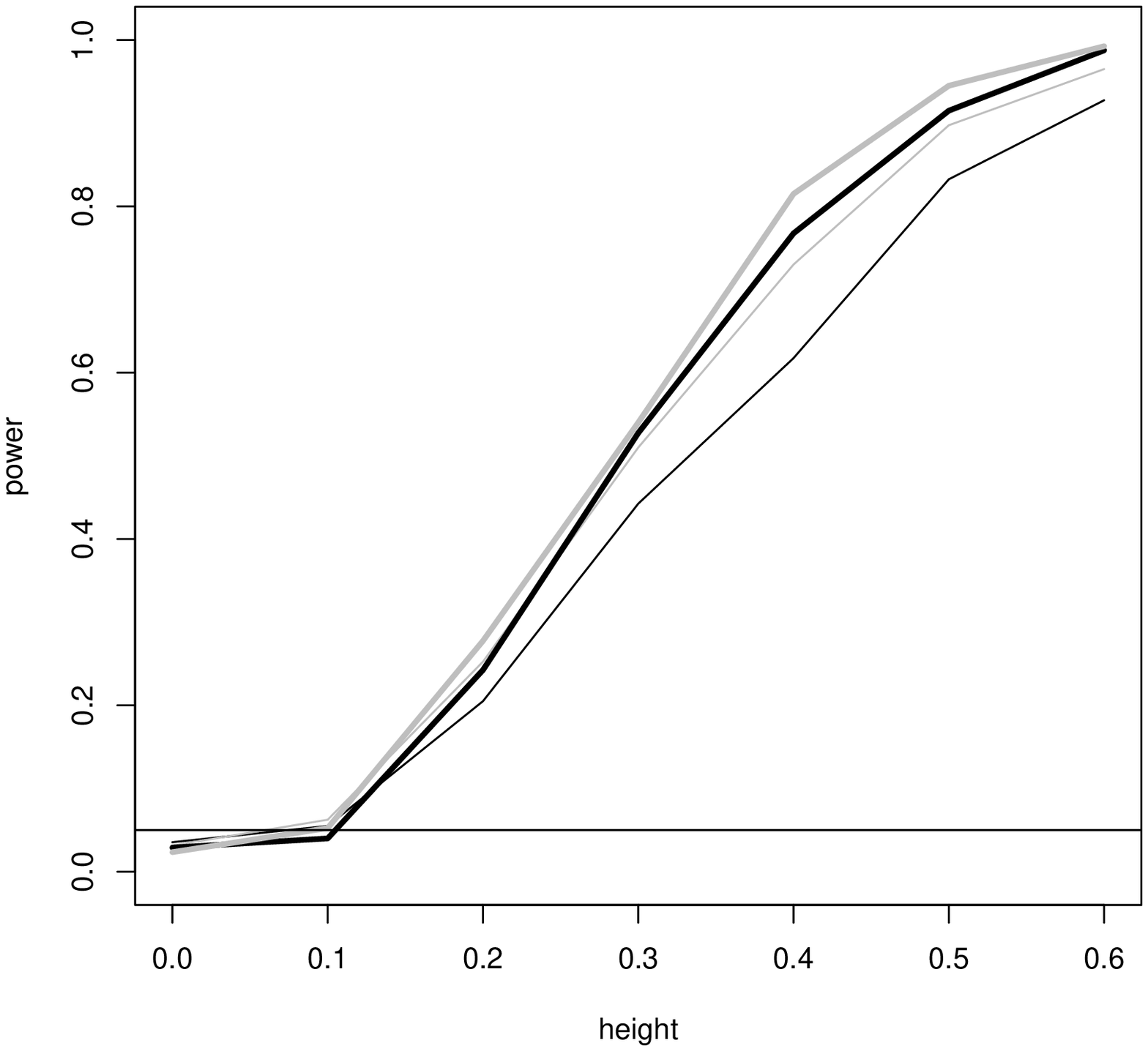}}\\
\resizebox{15cm}{7cm}{\includegraphics{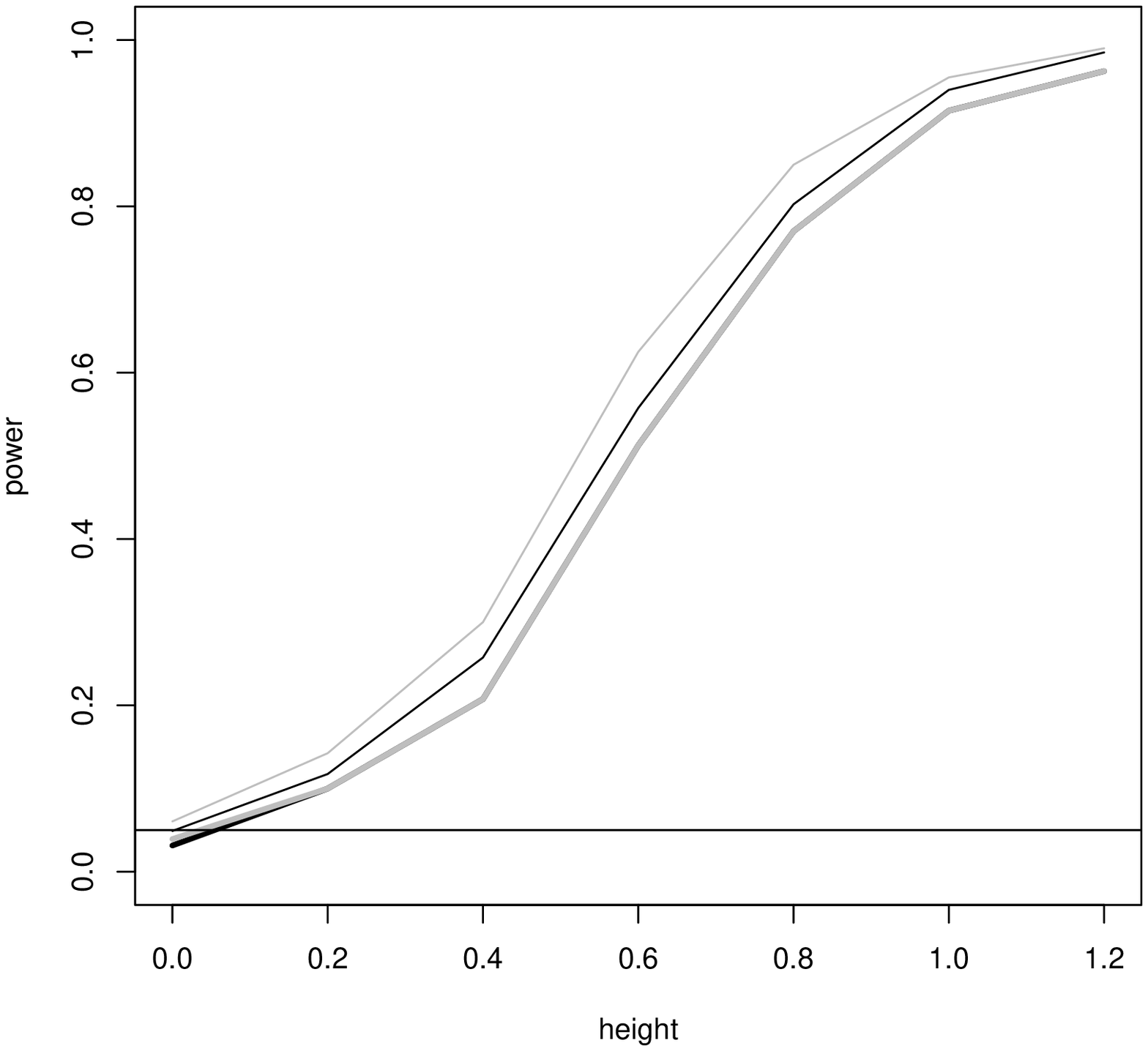}
\includegraphics{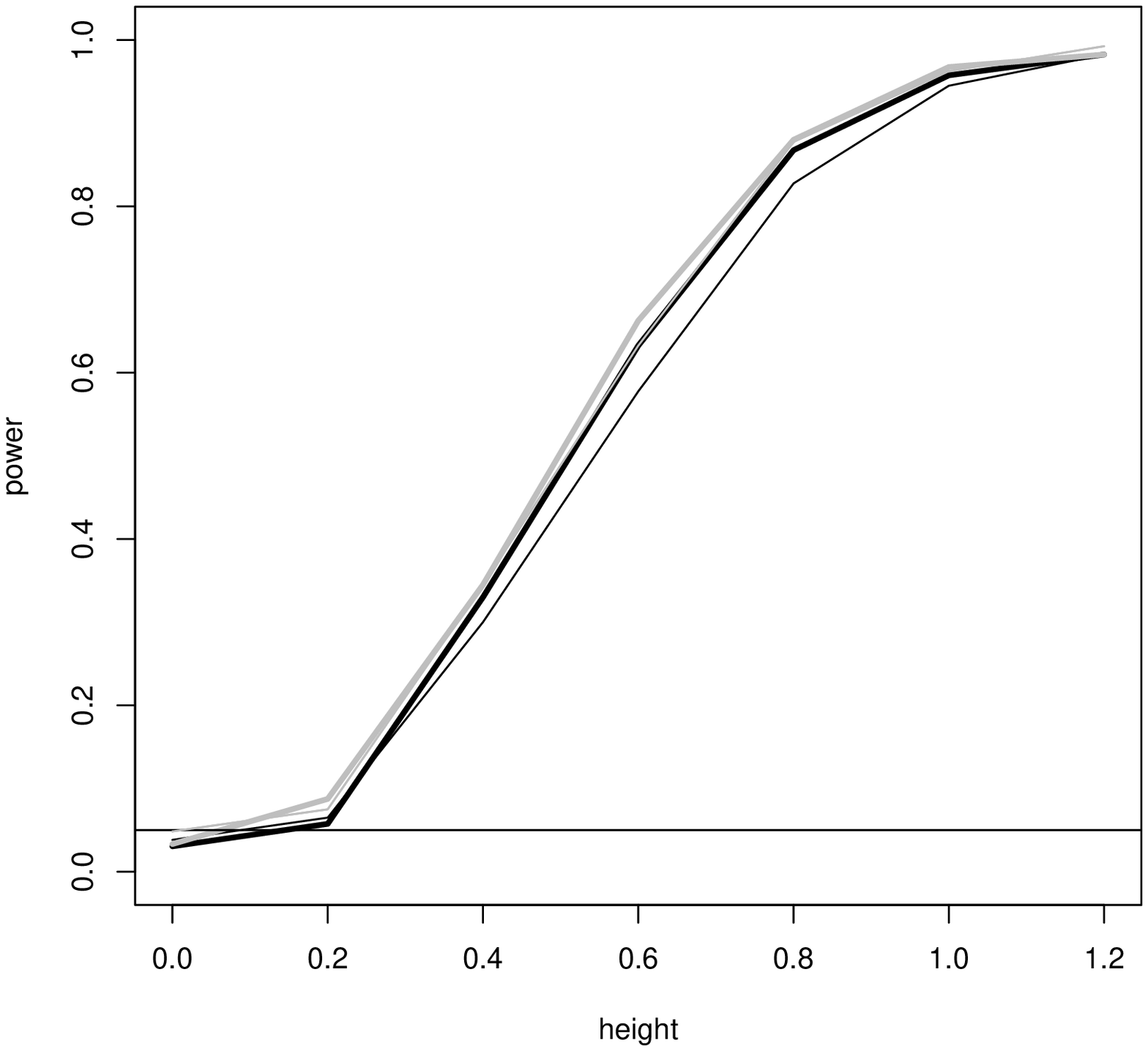}}\\
\resizebox{15cm}{7cm}{\includegraphics{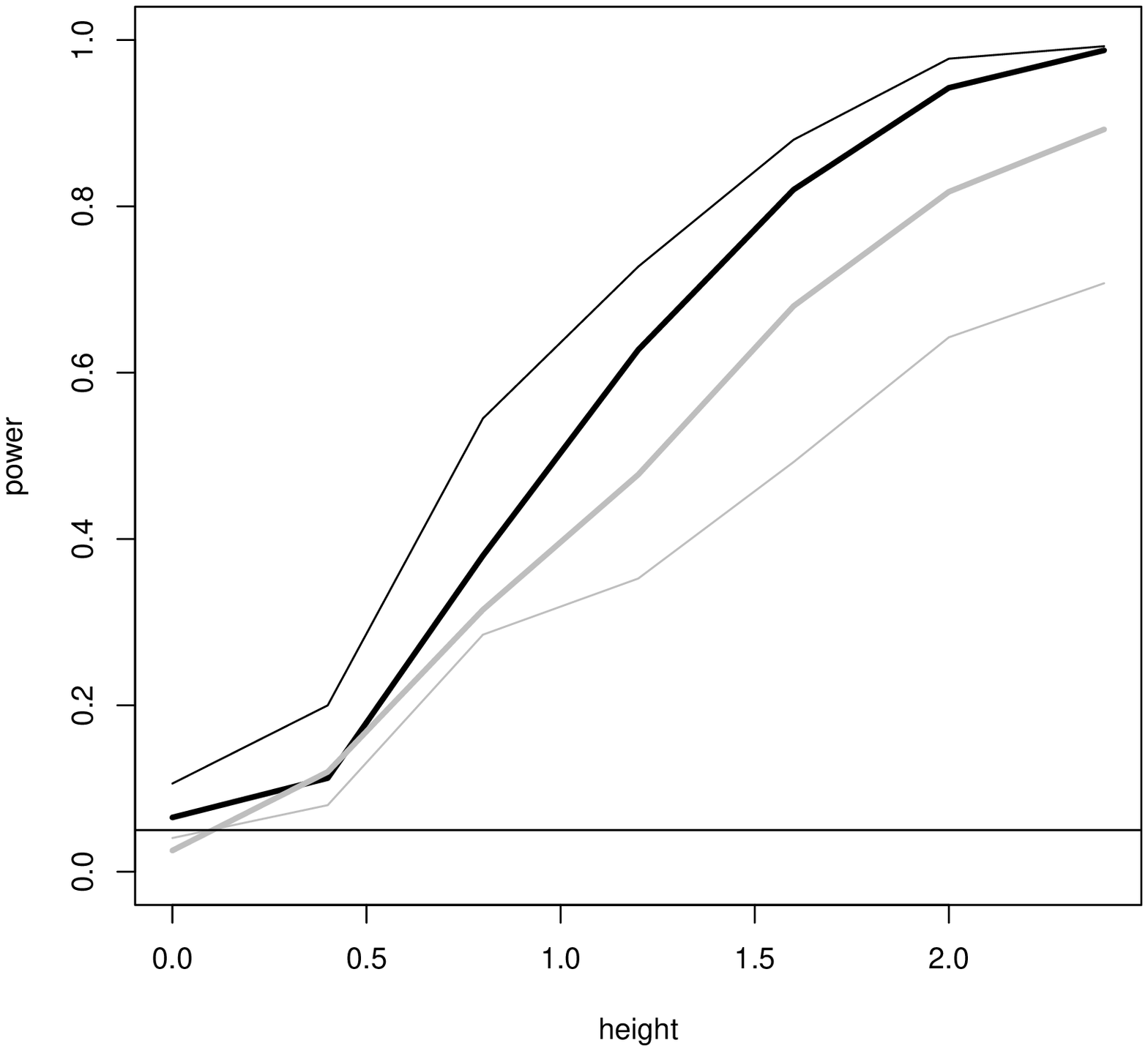}
\includegraphics{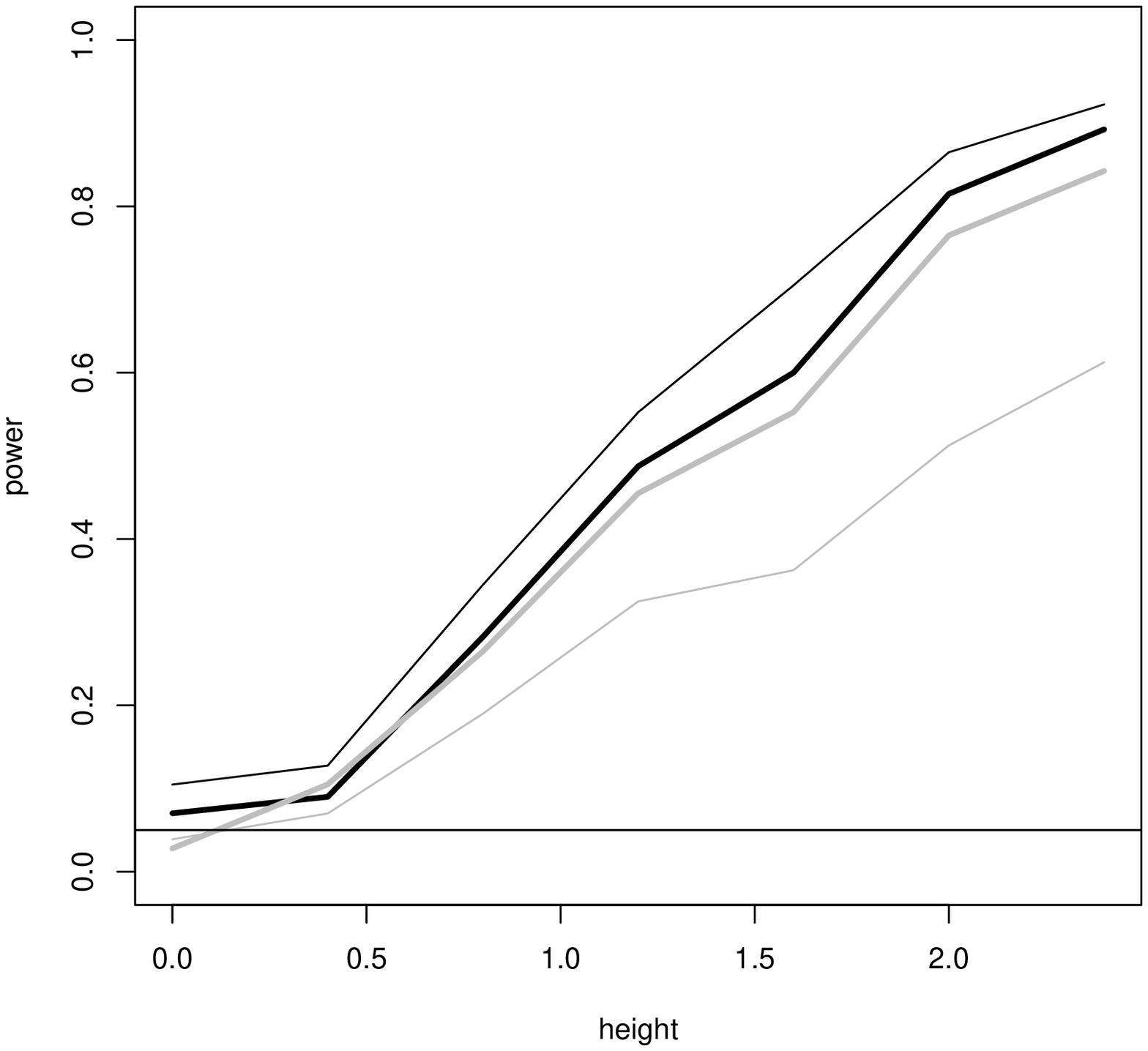}}\\
\end{figure}

The tests with Carlstein's adaptive choice of the block length could be improved further by using a more sophisticated estimate of $\rho$ than the ordinary sample autocorrelation used here. The latter is positively biased in the presence of a shift, which
 leads to too large choices of the block length. This negative effect becomes more severe for larger values of $\rho$, since the plug-in-estimate of the asymptotically MSE-optimal choice of $l_n$ increases more rapidly if $\hat{\rho}$ is close to 1, while it is rather stable for moderate and small values of $\hat{\rho}$. 
In our study, for $\rho=0$ the average value chosen for $l_n$ increases from about 2 to about 3, only, as the height of the shift increases, while it is from about 6 to about 9 if $\rho=0.4$, and even from about 16 to about 24 if $\rho=0.8$.
 An estimate of the autocorrelation coefficient which resists shifts could be used, 
 e.g. by applying a stepwise procedure which estimates the possible time of occurrence of a shift before calculating $\hat{\rho}$ from the corrected data, but this will not be pursued here.

\section{Auxiliary Results}
In this section, we will prove some auxiliary results which will play a crucial role in the proof of Theorem~\ref{LimitThm}. The main result of this section is the following proposition, which essentially shows that the degenerate part in the Hoeffding decomposition of the U-statistic $T_n(\lambda)$ is uniformly negligible.

\begin{proposition}
Let $(X_{n})_{n\geq 1}$ be a $1$-approximating functional with constants $(a_{k})_{k\geq 1}$ of an 
absolutely regular process with mixing coefficients $(\beta(k))_{k\geq 1}$, satisfying
\begin{equation}\label{summ2}
\sum^{\infty}_{k=1}k(\beta(k)+\sqrt{a_{k}}+\phi(a_{k}))<\infty. 
\end{equation}  
Moreover, let $g(x,y)$ be a $1$-continuous bounded degenerate kernel.  Then, as $n\rightarrow \infty$,
\begin{equation}
\frac{1}{n^{3/2}}  \sup_{0\leq \lambda\leq 1} \left|\sum_{i=1}^{[n\lambda]} \sum_{j=[n\lambda]+1}^n
 g(X_i,X_j) \right|\rightarrow 0
\label{eq:deg-conv}
\end{equation}
in probability.
\label{prop:deg-conv}
\end{proposition}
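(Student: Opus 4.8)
The plan is to view the double sum as a process indexed by the cut-point $k$, reduce the uniform-in-$\lambda$ statement to a second-moment bound on its increments, and then feed that bound into a maximal inequality. Abbreviating $S_n(k)=\sum_{i=1}^{k}\sum_{j=k+1}^{n}g(X_i,X_j)$, the supremum over $\lambda$ is the maximum of $|S_n(k)|$ over $0\le k\le n$, since $S_n$ jumps only at integer cut-points. For $0\le a<b\le n$, splitting $\{1,\dots,n\}$ into the blocks $I_1=\{1,\dots,a\}$, $I_2=\{a+1,\dots,b\}$, $I_3=\{b+1,\dots,n\}$ gives the clean increment formula $S_n(b)-S_n(a)=\sum_{i\in I_2}\sum_{j\in I_3}g(X_i,X_j)-\sum_{i\in I_1}\sum_{j\in I_2}g(X_i,X_j)=:T_1-T_2$, a difference of two rectangular degenerate sums over disjoint index sets.

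The technical core is the increment bound $E[(S_n(b)-S_n(a))^2]\le C\,(b-a)\,n$, uniform in $a,b$ and $n$. In the i.i.d.\ case this is transparent: degeneracy forces $E[g(X_i,X_j)g(X_{i'},X_{j'})]=0$ unless the two left indices agree and the two right indices agree, whence $E[T_1^2]=\sum_{i\in I_2}\sum_{j\in I_3}E[g(X_i,X_j)^2]\le (b-a)\,n\,\|g\|_\infty^2$ and likewise $E[T_2^2]\le (b-a)\,n\,\|g\|_\infty^2$, while $E[T_1T_2]=0$ because the $I_3$-index of $T_1$ is isolated from every index of $T_2$. For dependent $(X_i)$ I would follow the Borovkova--Burton--Dehling machinery: replace each $g(X_i,X_j)$ by its conditional expectation given a finite block $(Z_{i-p},\dots,Z_{i+p},Z_{j-p},\dots,Z_{j+p})$ of the underlying absolutely regular sequence, controlling the replacement error in $L^1$ via the approximation constants $a_k$ and the $1$-continuity modulus $\phi$, and then decouple two truncated factors whose blocks are separated by a gap using absolute regularity, with error governed by $\beta$ of that gap. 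Degeneracy annihilates the leading decoupled term, so that only near-diagonal index configurations survive; summing the covariance bounds over the separations returns the $O((b-a)n)$ estimate, and the factor $k$ in $\sum_k k(\beta(k)+\sqrt{a_k}+\phi(a_k))<\infty$ is precisely what renders those separation sums finite.

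Granting the increment bound, the increments $D_m:=S_n(m)-S_n(m-1)$ obey $E[(\sum_{m=a+1}^{b}D_m)^2]\le C\,n\,(b-a)=\sum_{a<m\le b}u_m$ with constant weights $u_m=Cn$. A maximal inequality of M\'oricz type (the boundary exponent case of Billingsley's Theorem~12.2, equivalently the Rademacher--Menshov bound) then gives $E[\max_{0\le k\le n}S_n(k)^2]\le C(\log n)^2\sum_{m=1}^{n}u_m=C(\log n)^2 n^2$. Dividing by $n^3$ yields $E\big[\big(n^{-3/2}\sup_{0\le\lambda\le1}|S_n([n\lambda])|\big)^2\big]\le C(\log n)^2/n\to 0$, so the quantity in \eqref{eq:deg-conv} tends to $0$ in $L^2$, hence in probability.

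The main obstacle is the dependent increment bound $E[(S_n(b)-S_n(a))^2]\le C(b-a)n$: the bookkeeping of the four-fold index sum, the verification that degeneracy removes every configuration with an isolated index up to the approximation errors, and the check that a single power of $k$ in the summability hypothesis suffices. By contrast the logarithmic loss in the maximal inequality is harmless, since the normalization $n^{3/2}$ leaves a full factor $n^{1/2}$ of room.
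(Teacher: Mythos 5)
Your proposal is correct, and its technical core coincides with the paper's own proof: the reduction of the supremum to the integer cut-points, the two-rectangle decomposition of the increment, and the second-moment bound $E\bigl[(S_n(b)-S_n(a))^2\bigr]\le C(b-a)n$ are exactly Lemmas~\ref{second} and \ref{third}, whose proofs rest on the covariance-decay estimate for degenerate kernels (Lemma~\ref{first}, quoted from Dehling and Fried) together with the counting-by-maximal-gap argument that consumes the single power of $k$ in \eqref{summ2} --- precisely the mechanism you sketch and, like the paper, do not re-prove from scratch. Where you genuinely diverge is the maximal-inequality step. The paper uses Chebyshev plus Billingsley's Theorem~\ref{Billing}, which demands an exponent $2\alpha$ with $\alpha>1/2$ on the weight sum; since the moment bound only delivers the boundary exponent $1$, the paper inflates it via $m-k\le n$ to $\frac{C}{n^2}(m-k)\le\frac{C}{n^{5/3}}(m-k)^{4/3}$, lands at $\alpha=2/3$, and gets a probability bound of order $n^{-1/3}$. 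You instead keep the boundary exponent and invoke a Rademacher--Menshov/M\'oricz-type maximal inequality, paying a $(\log n)^2$ factor: $E\max_{k}S_n(k)^2\le C(\log n)^2n^2$, hence convergence in $L^2$ at rate $(\log n)^2/n$ after normalization --- a slightly stronger conclusion (in $L^2$, with a better rate) obtained because the $n^{1/2}$ headroom absorbs the logarithm. One caveat on attribution: the boundary case is \emph{not} contained in Billingsley's theorem (his hypothesis is strictly $\alpha>1/2$), so you should cite M\'oricz's superadditive-bound version (your weight function $g(a,b)=Cn(b-a)$ is additive, so it applies) or carry out the dyadic chaining explicitly; with that reference made precise, your proof is complete.
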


The proof of Proposition~\ref{prop:deg-conv} requires some moment bounds for increments of 
U-statistics of degenerate kernels, which we will now state as separate lemmas.

\begin{lemma}\label{second}
Let $(X_{n})_{n\geq 1}$ be a $1$-approximating functional with constants $(a_{k})_{k\geq 1}$ of an 
absolutely regular process with mixing coefficients $(\beta(k))_{k\geq 1}$, satisfying
\begin{equation}
\sum^{\infty}_{k=1}k(\beta(k)+\sqrt{a_{k}}+\phi(a_{k}))<\infty. 
\end{equation}  
Moreover, let $g(x,y)$ be a $1$-continuous bounded degenerate kernel.  
Then, there exists a constant $C_{1}$ such that
\begin{equation}
E\left(\sum^{[n\lambda]}_{i=1}\sum^{n}_{j=[n\lambda]+1}g(X_{i},X_{j})\right)^{2}\leq C_{1}[n\lambda](n-[n\lambda]).
\end{equation}
\end{lemma}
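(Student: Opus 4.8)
The plan is to expand the square and control the resulting four-fold sum termwise by a covariance inequality that decays in the spread of the indices, the decay being exactly summable against the factor $k$ in hypothesis~(\ref{summ2}). Write $m=[n\lambda]$ and $S=\sum_{i=1}^{m}\sum_{j=m+1}^{n}g(X_i,X_j)$, so that
\begin{equation*}
E(S^2)=\sum_{i,i'=1}^{m}\ \sum_{j,j'=m+1}^{n}E\bigl[g(X_i,X_j)\,g(X_{i'},X_{j'})\bigr].
\end{equation*}
In the i.i.d.\ case degeneracy forces every summand with $\{i,j\}\neq\{i',j'\}$ to vanish: conditioning on $X_i$ (resp.\ $X_j$) and using $E(g(x,X_1))=E(g(X_1,y))=0$ kills any term in which one of the four indices is distinct from the other three, so only the $m(n-m)$ diagonal terms $i=i',\,j=j'$ survive, each bounded by $\|g\|_\infty^2$. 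The whole point of the proof is that, for $1$-approximating functionals, the off-diagonal terms do not vanish but decay with the separation of the indices, and that this decay is summable.

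The key step is therefore a covariance estimate: writing $\Psi(k)=\beta(k)+\sqrt{a_k}+\phi(a_k)$, I would show there is a constant $C$ with
\begin{equation*}
\bigl|E\bigl[g(X_i,X_j)\,g(X_{i'},X_{j'})\bigr]\bigr|\le C\,\Psi\bigl(\lfloor d/2\rfloor\bigr),\qquad d=\max\{|i-i'|,\,|j-j'|\}.
\end{equation*}
To prove this one isolates the single variable lying at the extreme end of whichever group (the $i$-group or the $j$-group) has the larger internal gap: since $i,i'\le m<j,j'$, that extreme variable is separated from the remaining three indices by a gap of at least $d$. One first replaces it by its conditional expectation on the block $(Z_{\cdot-s},\ldots,Z_{\cdot+s})$ with $s\approx d/2$, the replacement error being controlled in $L^1$ by $a_s$ and, through the kernel, by $\phi(a_s)$ using $1$-continuity; one then detaches this finite block from the block carrying the other three variables by the absolute-regularity coupling inequality, at cost $C\beta(d-2s)$; finally one invokes degeneracy, $E(g(x,X_1))=E(g(X_1,y))=0$, to make the detached factor have mean (approximately) zero. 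This is the covariance machinery of Borovkova, Burton and Dehling (2001), and reassembling the three error sources is the main obstacle — in particular one must keep track of the fact that the conditional-expectation surrogate of the kernel is only approximately degenerate, which is precisely what produces the $\sqrt{a_k}$ and $\phi(a_k)$ contributions in $\Psi$.

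Granting the covariance estimate, the counting is routine. The diagonal contributes at most $\|g\|_\infty^2\,m(n-m)$. For the remainder, grouping the ordered pairs by $a=|i-i'|$ and $b=|j-j'|$ and using $\#\{(i,i'):|i-i'|=a\}\le 2m$ together with $\#\{(j,j'):|j-j'|=b\}\le 2(n-m)$, one obtains
\begin{equation*}
E(S^2)\le C\,m(n-m)\sum_{a\ge 0}\sum_{b\ge 0}\Psi\bigl(\lfloor \max(a,b)/2\rfloor\bigr)\le C'\,m(n-m)\sum_{k\ge 0}(2k+1)\,\Psi\bigl(\lfloor k/2\rfloor\bigr),
\end{equation*}
where in the last step I used that $\#\{(a,b):\max(a,b)=k\}=2k+1$. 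The remaining series is comparable to $\sum_{k}k\,\Psi(k)$, which is finite by hypothesis~(\ref{summ2}). This yields the claimed bound with $m=[n\lambda]$ and $n-m=n-[n\lambda]$.
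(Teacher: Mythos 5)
Your argument is essentially the paper's own proof: expand the square, bound the $[n\lambda](n-[n\lambda])$ diagonal terms by boundedness of $g$, control the off-diagonal terms by a covariance inequality decaying in $\max\{|i-i'|,|j-j'|\}$, and sum against hypothesis (\ref{summ2}) --- the only difference being that the paper does not re-derive this covariance inequality but quotes it from Dehling and Fried (2012) as Lemma~\ref{first}, with decay index $[k/3]$, and that your count also covers the mixed terms ($i=i'$, $j\neq j'$), which the paper's displayed expansion silently drops. The one slip is internal to your sketch of the covariance estimate: with block radius $s\approx d/2$ the decoupling cost $\beta(d-2s)$ is vacuous; the three-way split (two approximation blocks plus a separating gap, each of length $\approx d/3$) yields $C\,\Psi(\lfloor d/3\rfloor)$ rather than $C\,\Psi(\lfloor d/2\rfloor)$ --- but this is immaterial, since $\sum_{k}k\,\Psi(\lfloor k/3\rfloor)<\infty$ still follows from (\ref{summ2}) and your counting argument goes through verbatim.
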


\begin{proof}
We can write
\begin{multline}
E\left(\sum^{[n\lambda]}_{i=1}\sum^{n}_{j=[n\lambda]+1}g(X_{i},X_{j})\right)^{2}
=\sum^{[n\lambda]}_{i=1}\sum^{n}_{j=[n\lambda]+1}E(g(X_{i},X_{j}))^{2} \\ +
2\sum_{1\leq i_{1}\neq i_{2}\leq [n\lambda]}\sum_{[n\lambda]+1\leq j_{1}\neq j_{2}\leq n}E\left(g(X_{i_{1}},X_{j_{1}})g(X_{i_{2}},X_{j_{2}})\right)
\end{multline}
The elements of the first sum all are bounded, hence
\begin{equation}
\sum^{[n\lambda]}_{i=1}\sum^{n}_{j=[n\lambda]+1}E(g(X_{i},X_{j}))^{2}\leq C[n\lambda](n-[n\lambda]).
\end{equation}
Concerning the second sum, by Lemma $\ref{first}$, we get 
\begin{eqnarray}
&&\hspace{-25mm} \sum_{1\leq i_{1}< i_{2}\leq [n\lambda]}\sum_{[n\lambda]+1\leq j_{1}< j_{2}\leq n}E\left(g(X_{i_{1}},X_{j_{1}})g(X_{i_{2}},X_{j_{2}})\right) \nonumber \\
&& \qquad \leq 4\, S\sum_{1\leq i_{1}< i_{2}\leq [n\lambda]}\sum_{[n\lambda]+1\leq j_{1}\leq j_{2}\leq n}\phi(a_{[k/3]}) \nonumber \\
&& \qquad \qquad+ 8\, S^{2}\sum_{1\leq i_{1}< i_{2}\leq [n\lambda]}\sum_{[n\lambda]+1\leq j_{1}\leq j_{2}\leq n}(\sqrt{a_{[k/3]}}+\beta([k/3]))
\end{eqnarray}
with $k=\max\{|i_2-i_1|,|j_2-j_1|\}$. We will first treat the summands with $k=i_{2}-i_{1}$. Suppose for one moment that $k$ is fixed and we will bound the number of indices that appear in the sum. Observe that in this case we have $[n\lambda]$ ways to choose $i_{1}$, once $i_{1}$ is chosen we have one way to pick $i_{2}$ because $i_{2}=i_{1}+k$. For $j_{1}$ we have as before $n-[n\lambda]$ ways to pick this index and then for each $j_{1}$, $j_{2}$ need to be in the interval $[j_{1},j_{1}+k]$ and there are exactly $k$ integers in such interval.
\begin{multline}
\sum_{1\leq i_{1}< i_{2}\leq [n\lambda]}\sum_{[n\lambda]+1\leq j_{1}< j_{2}\leq n}\left(4S\phi(a_{[k/3]}+8S^2\sqrt{a_{[k/3]}}+8S^2\beta([k/3]))\right)\\
\leq C[n\lambda](n-[n\lambda])\left(\sum^{n}_{k=1}k\phi(a_{k})+\sum^{n}_{k=1}k\sqrt{a_{k}}+\sum^{n}_{k=1}k\beta(k)\right)\leq C[n\lambda](n-[n\lambda])
\end{multline}
Analogously we can find the bounds for the terms with $k=i_{1}-i_{2}$, $k=j_{2}-j_{1}$ and $k=j_{1}-j_{2}$  using the conditions of summability.   
\end{proof}

We now define the process $G(\lambda)$, $0\leq \lambda \leq 1$, by
\begin{equation}
G_{n}(\lambda):=n^{-3/2}\sum^{[n\lambda]}_{i=1}\sum^{n}_{j=[n\lambda]+1}g(X_{i},X_{j}),\quad 0\leq \lambda \leq 1.
\end{equation}

\begin{lemma}\label{third} Under the conditions of Lemma \ref{second}, there exists a constant $C$ such that
\begin{equation}
E(|G_{n}(\eta)-G_{n}(\mu)|^2)\leq\frac{C}{n}(\eta-\mu),
\label{eq:g-bound}
\end{equation}
for all $0\leq \mu \leq \eta\leq 1$.
\end{lemma}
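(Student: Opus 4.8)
The plan is to express the increment $G_{n}(\eta)-G_{n}(\mu)$ as a difference of two two-sample $U$-statistics over \emph{adjacent} blocks of indices, and then to control each term by Lemma~\ref{second}. Writing $p=[n\mu]$ and $q=[n\eta]$ (so that $p\le q$), I would partition $\{1,\ldots,n\}$ into $A=\{1,\ldots,p\}$, $B=\{p+1,\ldots,q\}$ and $C=\{q+1,\ldots,n\}$. In the double sum defining $n^{3/2}G_{n}(\mu)$ the inner index ranges over $B\cup C$, while in $n^{3/2}G_{n}(\eta)$ the outer index ranges over $A\cup B$; the block $\sum_{i\in A}\sum_{j\in C}g(X_{i},X_{j})$ is common to both and cancels, leaving
\[
n^{3/2}\bigl(G_{n}(\eta)-G_{n}(\mu)\bigr)=\sum_{i\in B}\sum_{j\in C}g(X_{i},X_{j})-\sum_{i\in A}\sum_{j\in B}g(X_{i},X_{j}).
\]

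Next I would apply the elementary inequality $(a-b)^{2}\le 2a^{2}+2b^{2}$ to pass to the expected squares of the two terms separately. Each term is the two-sample $U$-statistic of the degenerate kernel $g$ formed from two consecutive stretches of the sequence, so by stationarity of $(X_{n})_{n\ge1}$ a shift of indices brings it exactly into the form covered by Lemma~\ref{second}. Since $|A|=p$, $|B|=q-p$ and $|C|=n-q$, this yields
\[
E\Bigl(\sum_{i\in B}\sum_{j\in C}g(X_{i},X_{j})\Bigr)^{2}\le C_{1}(q-p)(n-q),\qquad
E\Bigl(\sum_{i\in A}\sum_{j\in B}g(X_{i},X_{j})\Bigr)^{2}\le C_{1}\,p\,(q-p).
\]

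Combining the two bounds gives $n^{3}E|G_{n}(\eta)-G_{n}(\mu)|^{2}\le 2C_{1}(q-p)\bigl(n-(q-p)\bigr)\le 2C_{1}\,n\,(q-p)$, hence
\[
E|G_{n}(\eta)-G_{n}(\mu)|^{2}\le\frac{2C_{1}}{n^{2}}\bigl([n\eta]-[n\mu]\bigr).
\]
It then remains to replace the block count $[n\eta]-[n\mu]$ by $n(\eta-\mu)$, and this is the one step requiring care. If $[n\eta]=[n\mu]$ then $G_{n}$ does not change and the left-hand side vanishes; otherwise $[n\eta]-[n\mu]\le n(\eta-\mu)+1$, and in the regime $\eta-\mu\ge 1/n$ relevant to the subsequent tightness argument the additive constant is dominated, so that $[n\eta]-[n\mu]\le 2n(\eta-\mu)$ and the asserted bound $\tfrac{C}{n}(\eta-\mu)$ follows with $C=4C_{1}$. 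The only substantive ingredient is thus the block decomposition combined with Lemma~\ref{second}; the main things to get right are the index bookkeeping in that decomposition and the verification that Lemma~\ref{second} applies, via stationarity, to blocks that are not anchored at the index $1$.
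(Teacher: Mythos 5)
Your proof is correct and follows essentially the same route as the paper's: the identical three-block decomposition $A$, $B$, $C$ with the common $A\times C$ block cancelling, the elementary bound $(a-b)^2\le 2a^2+2b^2$, and Lemma~\ref{second} applied (via stationarity, to shift blocks not anchored at $1$) to the two adjacent-block sums, yielding $E|G_n(\eta)-G_n(\mu)|^2\le 2C_1([n\eta]-[n\mu])/n^2$. Your closing caveat about replacing $[n\eta]-[n\mu]$ by $n(\eta-\mu)$ is in fact more careful than the paper, which silently writes this last inequality even though it can fail when $[n\eta]>[n\mu]$ but $\eta-\mu<1/n$; the paper only ever invokes the lemma at grid points $\mu=k/n$, $\eta=m/n$ in the proof of Proposition~\ref{prop:deg-conv}, exactly the regime your argument covers.
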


\begin{proof}
We can write
\begin{eqnarray}
\label{ineq} 
&& \hspace{-10mm} E(|G_{n}(\eta)-G_{n}(\mu)|^2) \\
&\leq& \frac{2}{n^3}E\left(\sum^{[n\mu]}_{i=1}\sum^{[n\eta]}_{j=[n\mu]+1}g (X_{i},X_{j})\right)^2 + \frac{2}{n^3}E\left(\sum^{[n\eta]}_{i=[n\mu]+1}\sum^{n}_{j=[n\eta]+1}g (X_{i},X_{j}) \right)^2 \nonumber\\
&=& \frac{2}{n^3}E\left(\sum^{[n\mu]}_{i=1}\sum^{[n\eta]}_{j=[n\mu]+1}g (X_{i},X_{j})\right)^2 +\frac{2}{n^3}E\left(\sum^{[n\eta]-[n\mu]}_{i=1}\sum^{n-[n\mu]}_{j=[n\eta]-[n\mu]+1}g (X_{i},X_{j}) \right)^2
\nonumber\\
&\leq& C\frac{1}{n^3}\left([n\mu]([n\eta]-[n\mu])+([n\eta]-[n\mu])(n-[n\eta])\right)\leq\frac{C}{n}(\eta-\mu)
\nonumber
\end{eqnarray}
using the stationarity of the process $(X_n)_{n\in \N}$ and Lemma $\ref{second}$.
\end{proof}
\vspace{3mm}
{\em Proof of Proposition~\ref{prop:deg-conv}.}
From Lemma~\ref{third} we obtain, using Chebyshev's inequality,
\begin{equation}
P\left(|G_{n}(\eta)-G_{n}(\mu)|\geq \epsilon\right)\leq \frac{1}{\epsilon^2}\frac{C}{n}(\eta-\mu),
\end{equation}
for all $\epsilon>0$. Thus we get  for $0\leq k\leq m\leq n $ with $k,m,n \in \N$
\begin{eqnarray}
P\left(\left|G_{n}\left(\frac{m}{n}\right)-G_{n}\left(\frac{k}{n}\right)\right|\geq \epsilon\right)
&\leq & \frac{1}{\epsilon^2}E\left(G_{n}\left(\frac{m}{n}\right)-G_{n}\left(\frac{k}{n}\right)\right)^2
\nonumber \\
&\leq & \frac{1}{\epsilon^2}\frac{C}{n^2}(m-k)\leq \frac{1}{\epsilon^2}\frac{C}{n^{5/3}}(m-k)^{4/3} 
\label{ineq1}
\end{eqnarray}
as $m-k\leq n$. Now consider the variables 
\begin{equation}
\label{vari}
\zeta_{i}=\left\{
\begin{array}{cl}
G_{n}\left(\frac{i}{n}\right)-G_{n}\left(\frac{i-1}{n}\right)&\mbox{if } i=1,\ldots,n-1\\
0&\mbox{else }
\end{array}\right.
\end{equation}
and suppose that $S_{i}=\zeta_{1}+\zeta_{2}+\ldots+\zeta_{i}$ with $S_{0}=0$, then $S_{i}=G_{n}(\frac{i}{n})$. In consequence the inequality $(\ref{ineq1})$ is equivalent to
\begin{equation}
P(|S_{m}-S_{k}|\geq \epsilon)\leq \frac{1}{\epsilon^2}\left[\frac{C^{3/4}}{n^{5/4}}(m-k)\right]^{4/3}\quad \text{for}\quad 0\leq k\leq m\leq n.
\label{ineq2}
\end{equation}
So the assumption of Theorem \ref{Billing} are satisfaced with the variables $(\ref{vari})$ in the role of the $\xi_{i}$, $\beta=1/2$, $\alpha=2/3$ and $u_{l}=C^{3/4}/n^{5/4}$, $u_{o}=0$ and hence
\begin{equation}
P\left(\max_{1\leq i\leq n-1}|S_{i}|\geq \epsilon\right)
\leq \frac{K}{\epsilon^2}\left[\frac{C^{3/4}}{n^{5/4}}(n-1)\right]^{4/3}\leq \frac{KC}{\epsilon^2n^{1/3}}
\end{equation}
where $K$ depends only of $\alpha$ and $\beta$. Thus, $(\ref{eq:deg-conv})$ holds as $n\rightarrow\infty$.
\hfill $\Box$

\section{Proof of Main Results}

In this section, we will prove Theorem~\ref{LimitThm} and Theorem~\ref{th:anti-symm}. Note that Theorem~\ref{th:anti-symm} is a direct consequence of Theorem~\ref{LimitThm}, applied to anti-symmetric kernels. We will nevertheless present a direct proof of Theorem~\ref{th:anti-symm}, since this proof is much simpler than the proof in the general case. Moreover, Theorem~\ref{th:anti-symm} covers those cases that are most relevant in applications.

The first part of the proof is identical for both Theorem~\ref{LimitThm} and Theorem~\ref{th:anti-symm}.
Note that, for each $\lambda\in [0,1]$, the statistic $T_n(\lambda)$ is a two-sample U-statistic. Thus,
using the Hoeffding decomposition (\ref{eq:h-decomp}), we can write $T_{n}(\lambda)$ as
\begin{align}
T_{n}(\lambda)&=\frac{1}{n^{3/2}}\left(\sum^{[\lambda n]}_{i=1}\sum^{n}_{j=[\lambda n]+1}(h_{1}(X_{i})+h_{2}(X_{j})+g(X_{i},X_{j}))\right) \nonumber\\
& =\frac{1}{n^{3/2}}\left((n-[n\lambda])\sum^{[n\lambda]}_{i=1}h_{1}(X_{i})+[n\lambda]\sum^{n}_{j=
[n\lambda]+1}h_{2}(X_{j})+\sum^{[\lambda n]}_{i=1}\sum^{n}_{j=[\lambda n]+1}g(X_{i},X_{j})\right)
\end{align}
By Proposition~\ref{prop:deg-conv}, we know that
\[
  \frac{1}{n^{3/2}} \sup_{0\leq \lambda \leq 1} \left|\sum^{[\lambda n]}_{i=1}\sum^{n}_{j=[\lambda n]+1}g(X_{i},X_{j}) \right| \rightarrow 0
\]
in probability. Thus, by Slutsky's lemma, it suffices to show that the sum of the first two terms, i.e.
\begin{equation}
  \left( \frac{n-[n\lambda]}{n^{3/2}} \sum^{[n\lambda]}_{i=1}h_{1}(X_{i})+\frac{[n\lambda]}{n^{3/2}} \sum^{n}_{j=
[n\lambda]+1}h_{2}(X_{j})\right)_{0\leq \lambda\leq 1}
\label{eq:main-term}
\end{equation}
converges in distribution to the desired limit process.

\begin{proof}[Proof of Theorem~\ref{th:anti-symm}] It remains to show that (\ref{eq:main-term}) converges in distribution to $\sigma W^{(0)}(\lambda), 0\leq \lambda\leq 1$, where $(W^{(0)}(\lambda))_{0\leq \lambda \leq 1}$ is standard Brownian bridge on $[0,1]$, and where $\sigma^2$ is defined in (\ref{eq:anti-symm-var}).
By antisymmetry of the kernel $h(x,y)$, we obtain that $h_2(x)=-h_1(x)$. Hence, in this case, (\ref{eq:main-term}) can be rewritten as
\[
\frac{n-[n\lambda]}{n^{3/2}} \sum_{i=1}^{[n\lambda]} h_1(X_i) -\frac{[n\lambda]}{n^{3/2}} \sum_{i=[n\lambda]+1}^n h_1(X_i)  = \frac{1}{n^{1/2}} \sum_{i=1}^{[n\lambda]} h_1(X_i)
 -\frac{[n\lambda]}{n^{3/2}} \sum_{i=1}^n h_1(X_i).
\]
By Proposition~2.11 and Lemma~2.15 of Borovkova, Burton and Dehling (2001), the sequence $(h_1(X_i))_{i\geq1}$ is a 1-approximating functional with approximating constant $C\sqrt{a_k}$. Since $h_1(X_i)$ is bounded, the $L_2$-near epoch dependence in the sense of Wooldridge and White (1988) also holds, with the same constants. Moreover, the underlying process $(Z_n)_{n\geq 1}$ is absolutely regular, and hence also strongly mixing. Thus we may apply the invariance principle in Corollary~3.2 of Wooldridge and White (1988), and obtain that the partial sum process 
\begin{equation}
\left(\frac{1}{n^{1/2}}\sum^{[n\lambda]}_{i=1}h_{1}(X_{i})\right)_{0\leq \lambda\leq 1}
\end{equation}
converges weakly to Brownian motion $(W(\lambda))_{0\leq \lambda \leq 1}$ with $\Var(W(1))=\sigma^2$. The statement of the Theorem follows with the continuous mapping theorem for the mapping
$ x(t) \mapsto x(t)-tx(1),\; 0\leq t\leq 1$.
\end{proof}

The proof of Theorem~\ref{LimitThm} requires an invariance principle for the partial sum process of 
$\R^2$-valued dependent random variables; see Proposition~\ref{invprin} below. For mixing processes, such invariance principles have been established even for partial sums of Hilbert space valued random vector, e.g. by Dehling (1983). In this paper, we provide an extension of these results to functionals of mixing processes.

\begin{proposition}\label{invprin}
Let $(X_n)_{n\in\N}$ be a $1-$approximating functional of an absolutely re\-gular process with mixing coefficients $(\beta(k))$ and let $h_{1}(\cdot)$, $h_{2}(\cdot)$ be bounded $1-$Lipschitz functions with mean zero.  Suppose that the sequences $(\beta(k))_{k\geq0}$, $(a_{k})_{k\geq0}$ and $(\phi(a_{k}))_{k\geq0}$ satisfy 

\begin{equation}\label{summ}
\sum_{k}k^{2}(\beta(k)+a_{k}+\phi(a_{k}))<\infty.
\end{equation} 
Then, as $n\rightarrow\infty$,
 
\begin{equation}\label{limit}
\left(\frac{1}{\sqrt{n}}\sum^{[nt]}_{i=1}\begin{pmatrix} h_{1}(X_{i}) \\ h_{2}(X_{i})\end{pmatrix}\right)_{0\leq t \leq 1} \longrightarrow \begin{pmatrix} W_{1}(t) \\ W_{2}(t)\end{pmatrix}_{0\leq t\leq 1} 
\end{equation}
where $(W_1(t),W_2(t))_{0\leq t\leq 1}$ is a two-dimensional Brownian motion with mean zero and covariance
$E(W_k(s)\, W_l(t))=\min(s,t) \sigma_{kl}$,
for $0\leq s,t\leq 1$ with $\sigma_{k,l}$ as defined in $(\ref{variance})$.
\end{proposition}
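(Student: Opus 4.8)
The plan is to obtain the $\R^2$-valued invariance principle by the classical route of finite-dimensional convergence plus tightness, reducing the former to a scalar central limit theorem via the Cram\'er--Wold device. Writing $S_n(t)=n^{-1/2}\sum_{i\leq [nt]}(h_1(X_i),h_2(X_i))^{T}$, I would first pass to the increments $\Delta_r=S_n(t_r)-S_n(t_{r-1})$ attached to a grid $0=t_0<t_1<\cdots<t_m=1$; the fidi vector $(S_n(t_1),\ldots,S_n(t_m))$ is a fixed invertible linear image of $(\Delta_1,\ldots,\Delta_m)$, so it suffices to show the latter converges to independent Gaussians. By Cram\'er--Wold on $(\R^2)^m$, any linear functional $\sum_r \langle \psi_r,\Delta_r\rangle$ equals a single sum $n^{-1/2}\sum_{i=1}^n f_{n,i}(X_i)$, where each $f_{n,i}$ is one of finitely many bounded $1$-Lipschitz functions, held constant on the blocks cut out by the $t_r$. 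Thus finite-dimensional convergence follows from a CLT for such block-constant sums of a bounded Lipschitz functional.

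\textbf{The scalar CLT.} This CLT is the analytic heart of the argument, and I would prove it by the big-block/small-block (Bernstein) method coupled with the $1$-approximation. Concretely, I would replace $f_{n,i}(X_i)$ by the conditional expectation $E[f_{n,i}(X_i)\mid Z_{i-p},\ldots,Z_{i+p}]$; since the $f_{n,i}$ are bounded and Lipschitz and $(X_i)$ is $1$-approximating, the $L_2$ cost of this replacement is controlled by $a_p$ together with the $1$-continuity modulus, and is negligible under (\ref{summ}). The truncated summands depend on only finitely many coordinates of the absolutely regular sequence, so that sums over big blocks separated by small blocks of width exceeding $2p$ can be coupled with independent copies at a cost governed by $\beta(\cdot)$; a Lindeberg--Lyapunov CLT for the resulting independent array then delivers asymptotic normality. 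Alternatively, once the $1$-approximation has reduced the problem to a finitely dependent (hence strongly mixing) array, one may import a scalar invariance principle of the type of Dehling (1983) or the one used in the proof of Theorem~\ref{th:anti-symm}.

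\textbf{Identifying the limiting covariance.} Next I would compute $\tfrac1n\Var\big(\sum_{i=1}^n f_{n,i}(X_i)\big)$ and show it converges to $\sum_{r=1}^m (t_r-t_{r-1})\,Q_r$, where $Q_r=a_r^2\sigma_{11}+2a_rb_r\sigma_{12}+b_r^2\sigma_{22}$ is the quadratic form attached to the weights $(a_r,b_r)=\psi_r$ of block $r$, with $\sigma_{kl}$ as in (\ref{variance}). Two points need checking: that the series defining $\sigma_{kl}$ converges (immediate from (\ref{summ}) via the covariance estimates behind Lemma~\ref{second}), and that the covariance between sums over two distinct blocks is $O(1)$ and therefore vanishes after division by $n$ — the same summability bookkeeping as in Lemma~\ref{second}. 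This identifies the limit of $(\Delta_1,\ldots,\Delta_m)$ as independent $N(0,(t_r-t_{r-1})\Sigma)$ with $\Sigma=(\sigma_{kl})$, i.e. the increments of the asserted bivariate Brownian motion.

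\textbf{Tightness and the main obstacle.} For tightness I would argue componentwise: a second-moment increment bound $E|S_n^{(k)}(t)-S_n^{(k)}(s)|^2\leq C|t-s|$, obtained exactly as in Lemma~\ref{third} from the summable covariance structure of $(h_k(X_i))_{i\geq1}$, yields tightness of each coordinate process in $D[0,1]$, and tightness of the two coordinates separately gives tightness of the $\R^2$-valued process. Combining finite-dimensional convergence with tightness then produces weak convergence to the bivariate Brownian motion with $\Cov(W_k(s),W_l(t))=\min(s,t)\sigma_{kl}$. The main obstacle is the scalar CLT of the second step: making the big-block/small-block decoupling rigorous for a \emph{block-constant} array of bounded Lipschitz functionals, that is, balancing the block lengths $p$ and $q$ against $n$ so that the $1$-approximation error, the $\beta$-coupling error, and the small-block remainder are simultaneously negligible under (\ref{summ}), while the big-block sums satisfy the Lindeberg condition. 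Everything else is routine bookkeeping with the covariance bounds already developed for Lemma~\ref{second}.
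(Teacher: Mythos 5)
Your finite-dimensional argument is essentially the paper's own: Cram\'er--Wold reduction to block-constant linear combinations, a Bernstein big-block/small-block decomposition, coupling of the big blocks with independent copies at a cost controlled by the $1$-approximation constants and the $\beta$-coefficients, negligibility of small blocks and remainders by second-moment bounds, and a CLT for the resulting independent array. The paper packages the coupling step as a citation of Theorem~\ref{The3} (Theorem 3 of Borovkova--Burton--Dehling) and the one-dimensional CLT as Theorem~\ref{The4}, with explicit block lengths $L_n=[n^{3/4}]$, $l_n=[n^{1/2}]$; what you call the ``main obstacle'' (balancing approximation error, coupling error and the Lindeberg condition) is exactly what these two cited results resolve, so that part of your plan is sound and is the same route.

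The genuine gap is in your tightness step. A second-moment increment bound $E|S_n^{(k)}(t)-S_n^{(k)}(s)|^2\leq C|t-s|$ is the \emph{borderline} case of the Kolmogorov--Billingsley moment criterion (exponent exactly $1$ in $|t-s|$) and does not imply tightness: Chebyshev plus this bound gives $P(|S_n^{(k)}(t)-S_n^{(k)}(s)|>\epsilon)\leq C|t-s|/\epsilon^2$, and summing over a partition into intervals of length $\delta$ produces a bound that does not vanish as $\delta\to 0$; indeed there exist stationary square-integrable sequences with summable covariances satisfying the CLT for which the invariance principle fails, so no argument from second moments alone can close this step. Your appeal to ``exactly as in Lemma~\ref{third}'' overlooks why that lemma suffices in Proposition~\ref{prop:deg-conv}: there the bound is $\frac{C}{n}(\eta-\mu)$, carrying a spare factor $n^{-1}$ because the degenerate part is normalized by $n^{-3/2}$, and it is precisely this spare factor that permits the upgrade $\frac{C}{n^{2}}(m-k)\leq \frac{C}{n^{5/3}}(m-k)^{4/3}$ before invoking the maximal inequality of Theorem~\ref{Billing}. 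For the non-degenerate partial sums normalized by $n^{-1/2}$ there is no such spare factor, and the trick fails. This is why the paper instead proves tightness from the \emph{fourth}-moment bound of Lemma~\ref{lem2.24} ($ES_N^4\leq CN^2$, valid for bounded $1$-approximating functionals under (\ref{summ})), which gives $P(|S_n^{(k)}(t)-S_n^{(k)}(s)|\geq\epsilon)\leq \epsilon^{-4}C|t-s|^2$, i.e.\ exponent $2>1$, and then applies the maximal inequality (Theorem~\ref{Billing}) and the tightness criterion of Theorem~\ref{Billing1}. Replacing your second-moment bound by this fourth-moment route repairs the proof; the rest of your outline, including the reduction of vector-valued tightness to componentwise tightness, is correct.
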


\begin{proof}
To prove $(\ref{limit})$, we need to establish finite dimensional convergence and tightness. 
Concerning finite-dimensional convergence,  by the Cram\'er-Wold device it suffices to show the convergence in distribution of a linear combination of the coordinates of the vector
\begin{multline}
  \left(\frac{1}{\sqrt{n}} \sum^{[nt_{1}]}_{i=1}h_{1}(X_{i}),\frac{1}{\sqrt{n}} \sum^{[nt_{1}]}_{i=1}h_{2}(X_{i}), \ldots,  \frac{1}{\sqrt{n}} \sum^{[nt_{j}]}_{i=1}h_{1}(X_{i}),\frac{1}{\sqrt{n}} \sum^{[nt_{j}]}_{i=1}h_{2}(X_{i})),\right.\\
   \left.\ldots, \frac{1}{\sqrt{n}} \sum^{n}_{i=1}h_{1}(X_{i}),\frac{1}{\sqrt{n}}\sum^{n}_{i=1}h_{2}(X_{i})\right),
\end{multline}
for $0=t_{0}<t_{1}<\ldots<t_{j}<\ldots<t_{k}=1$.  Any such linear combination can be expressed as
\begin{equation}
\sum_{j=1}^k\frac{1}{\sqrt{n}} \sum^{[nt_{j}]}_{i=[nt_{j-1}]+1}(a_{j}h_{1}(X_{i})+b_{j}h_{2}(X_{i})),
\end{equation}
for $(a_{j},b_{j})^{k}_{j=1}\in\R^{2\, k}$. By using the Cram\'er-Wold device again,  the weak convergence of this sum is equivalent to the weak convergence of the vector
\begin{multline}\label{vector}
  \left(\frac{1}{\sqrt{n}} \sum^{[nt_{1}]}_{i=1}(a_{1}h_{1}(X_{i})+b_{1}h_{2}(X_{i})), \ldots,  \frac{1}{\sqrt{n}} \sum^{[nt_{j}]}_{i=[nt_{j-1}]+1}(a_{j}h_{1}(X_{i})+b_{j}h_{2}(X_{i})),\right.\\
   \left.\ldots, \frac{1}{\sqrt{n}} \sum^{n}_{i=[nt_{k-1}]+1}(a_{k}h_{1}(X_{i})+b_{k}h_{2}(X_{i}))\right)
\end{multline}
to
\begin{multline}
\big(a_1(W_1(t_{1})-W_1(t_{0}))+b_1(W_2(t_{1})-W_2(t_{0})),\ldots,\\
a_k(W_1(t_{k})-W_1(t_{k-1}))+b_k(W_2(t_{k})-W_2(t_{k-1}))\big).
\end{multline}
Since $(X_{n})_{n\geq1}$ is a $1-$approximating functional, it can be coupled with a process consisting of independent blocks.  Given integers $L:=L_{n}=[n^{3/4}]$ and $l_{n}=[n^{1/2}]$, we introduce the $(l,L)$ blocking $(B_{m})_{m\geq0}$ of the variables $(a_{j}h_{1}(X_{i})+b_{j}h_{2}(X_{i}))$ with $i=[nt_{j-1}]+1,\ldots, [nt_{j}]$, $j=0,\ldots,k$ and
\begin{equation}
B_{m}:=\sum^{m(L_{n}+(m-1)l_{n})}_{i=(m-1)(L_{n}+l_{n})+1}(a_{j}h_{1}(X_{i})+b_{j}h_{2}(X_{i}))
\end{equation}
and separating blocks
\begin{equation}
\tilde{B}_{m}:=\sum^{m(L_{n}+l_{n})}_{i=mL_{n}+(m-1)l_{n}+1}(a_{j}h_{1}(X_{i})+b_{j}h_{2}(X_{i})).
\end{equation}
By Theorem \ref{The3} there exists a sequence of independent blocks $(B'_{m})$ with the same blockwise marginal distribution as $(B_{m})$ and such that 
\[
P\left(|B_{m}-B'_{m}|\leq 2\alpha_{l}\right)\geq 1-\beta(l)-2\alpha_{l},
\]
where $\alpha_{l}:=\left(2\sum^{\infty}_{k=[l_{n}/3]}a_{k}\right)^{1/2}$.
We can express the components of our vector (\ref{vector}) as a sum of blocks
\begin{multline}
\sum^{[nt_{j+1}]}_{i=[nt_{j}]+1}(a_{j}h_{1}(X_{i})+b_{j}h_{2}(X_{i}))\\
= \sum_{m=\left[\frac{nt_j}{L+l}\right]+1}^{\left[\frac{nt_{j+1}}{L+l}\right]}B_m+\sum_{m=\left[\frac{nt_j}{L+l}\right]+1}^{\left[\frac{nt_{j+1}}{L+l}\right]}\tilde{B}_m+\sum_{R_j}(a_{j}h_{1}(X_{i})+b_{j}h_{2}(X_{i})),
\end{multline}
where $R_j$ denotes the set of indices not contained in the blocks. Observe that by the Lemma \ref{lem2.23} for any set $A\subset \{1,\ldots,n\}$
\begin{equation}
E\left(\sum_{i\in A}(a_{j}h_{1}(X_{i})+b_{j}h_{2}(X_{i}))\right)^{2}\leq C \#A
\end{equation}
and hence
\begin{equation}
E\left(\sum_{m=\left[\frac{nt_j}{L+l}\right]+1}^{\left[\frac{nt_{j+1}}{L+l}\right]}\tilde{B}_m\right)^2\leq C\frac{n}{L_n+l_n}l_n\leq Cn^{3/4},
\end{equation}
so it follows with the Chebyshev inequality that this term is negligible. For the last summand, we have that
\begin{equation}
E\left(\sum_{R_j}(a_{j}h_{1}(X_{i})+b_{j}h_{2}(X_{i}))\right)^2\leq C2(L_n+l_n)\leq Cn^{3/4}.
\end{equation}
Furthermore, we need to show that we can replace the blocks $B_m$ by the independent coupled blocks $B'_m$:
\begin{eqnarray*}
P\left(\left|\frac{1}{\sqrt{n}}\sum_{m=\left[\frac{nt_j}{L+l}\right]+1}^{\left[\frac{nt_{j+1}}{L+l}\right]}(B_m-B'_m)\right|>\epsilon\right)
&\leq& \sum_{m=\left[\frac{nt_j}{L+l}\right]+1}^{\left[\frac{nt_{j+1}}{L+l}\right]}P\left(|B_m-B'_m|>\frac{\epsilon\sqrt{n}}{n^{1/4}}\right) \\
&\leq& n^{\frac{1}{4}}\left(\beta([\frac{l_n}{3}])+\alpha_{[\frac{l_n}{3}]}\right)\rightarrow0
\end{eqnarray*}
as $n\rightarrow\infty$ by our conditions on the mixing coefficients and approximation constants. Here we used that fact that $\alpha_n\rightarrow0$ and thus, for almost all $n\in\N$,
\begin{equation}
P\left(|B_m-B'_m|>\epsilon n^{1/4}\right)\leq P\left(|B_m-B'_m|>2\alpha_{l_n}\right).
\end{equation}
With the above arguments the result holds if we show the convergence of
\begin{equation}
\frac{1}{\sqrt{n}}\left(\sum_{m=\left[\frac{nt_0}{L+l}\right]+1}^{\left[\frac{nt_{1}}{L+l}\right]}B'_{m},\ldots,\sum_{m=\left[\frac{nt_k}{L+l}\right]+1}^{\left[\frac{nt_{k+1}}{L+l}\right]}B'_{m}\right).
\end{equation}
Since this vector has independent components, we only need to show the one-dimensional convergence, which is a consequence of Theorem \ref{The4}, using the summability condition $(\ref{summ})$.    

We now turn to the question of tightness and show that, for each $\epsilon$ and $\eta$, there exist a $\delta$, $0<\delta<1$, and an integer $n_{0}$ such that, for $0\leq t\leq1$,
\begin{equation}
\frac{1}{\delta}P\left(\sup_{t\leq s\leq t+\delta}|Y_{n}(s)-Y_{t}|\geq \epsilon\right)\leq\eta,\quad n\geq n_{0}
\end{equation}
with
\begin{equation}
Y_{n}(t)=\frac{1}{\sigma\sqrt{n}}\sum_{i=1}^{[nt]}h_1(X_i)+(nt-[nt])\frac{1}{\sigma\sqrt{n}}X_{[nt]+1}
\end{equation}
($h_2$ can be treated in the same way) and by Theorem $\ref{Billing1}$, this condition reduces to: For each positive $\epsilon$ there exist a $\alpha >1$ and an integer $n_{0}$, s. t.
\begin{equation}
P\left(\max_{i\leq n}\left|\sum^{i}_{j=1}h_{1}(X_{j})\right| \geq \lambda\sqrt{n}\right)\leq \frac{\epsilon}{\lambda^{2}},\quad n\geq n_{0}.
\end{equation}
Let $t\geq s$, $s,t\in [0,1]$. By Lemma $\ref{lem2.24}$ we get 

\begin{eqnarray}
E\left(\left|\frac{1}{\sqrt{n}}\sum^{[nt]}_{i=1}h_{1}(X_{i})-\frac{1}{\sqrt{n}}\sum^{[ns]}_{i=1}h_{1}(X_{i})\right|^{4}\right)&=& \frac{1}{n^{2}}E\left(\sum^{[nt]}_{[ns]+1}h_{1}(X_{i})\right)^{4}\nonumber\\
& \leq &\frac{1}{n^2}(([nt]-[ns])^{2}C)
\end{eqnarray}
and this implies
\begin{equation}
P \left(\left|\frac{1}{\sqrt{n}}\sum^{m}_{i=1}h_{1}(X_{i})-\frac{1}{\sqrt{n}}\sum^{k}_{i=1}h_{1}(X_{i})\right|\geq \epsilon\right)\leq \frac{1}{\epsilon^{4}}\left(\frac{C^{1/2}}{n}(m-k)\right)^{2}.
\end{equation}
By Theorem \ref{Billing}
\begin{equation}
P\left(\max_{i\leq n}\left|\sum^{i}_{j=1}h_{1}(X_{j})\right|\geq \epsilon\sqrt{n}\right)\leq \frac{K}{\epsilon^{4}}\left(\frac{C^{1/2}}{n}(n-1)\right)^{2}
\end{equation}
and we get the assertion.    In this way, we have established tightness of each of the two coordinates of the 
partial sum process. This also implies tightness of the vector-valued process.
\end{proof}

\begin{proof}[Proof of Theorem~\ref{LimitThm}]
From Proposition $\ref{invprin}$ we obtain that
\begin{equation}
\left(\frac{1}{\sqrt{n}}\sum^{[n\lambda]}_{i=1}\begin{pmatrix} h_{1}(X_{i})\\h_{2}(X_{i})\end{pmatrix}\right)_{0\leq \lambda\leq 1}\longrightarrow 
\begin{pmatrix} W_1(\lambda) \\ W_2(\lambda) \end{pmatrix}_{0\leq \lambda\leq 1},
\label{eq:2D-ip}
\end{equation}
in distribution on the space $(D([0,1]))^2$. 
We consider the functional given by
\begin{equation}
\begin{pmatrix} x_{1}(t) \\ x_{2}(t) \end{pmatrix}\mapsto (1-t)x_{1}(t)+t(x_{2}(1)-x_{2}(t)), \quad 0\leq t\leq 1.
\end{equation}
This is a continuous mapping from $(D[0,1])^2$ to $D[0,1]$, so we may apply the continuous mapping theorem to (\ref{eq:2D-ip}), and obtain
\begin{multline*}
 \left(\frac{n-[n\lambda]}{n^{3/2}}\sum^{[n\lambda]}_{i=1}h_{1}(X_{i})
+\frac{[n\lambda]}{n^{3/2}} \sum^{n}_{j=
[n\lambda]+1}h_{2}(X_{j})\right)_{0\leq \lambda\leq 1}\\
\longrightarrow\left((1-\lambda)W_{1}(\lambda)+\lambda (W_{2}(1)-W_{2}(\lambda))\right)_{0\leq \lambda\leq 1}.
\end{multline*}
Together with the remarks at the beginning of this section, this proves Theorem~\ref{LimitThm}.
\end{proof}

\section{Appendix: Some Auxiliary Results from the Literature}

In this section, we collect some known lemmas and theorems for weakly dependent data. We start with some results on the behaviour of partials sums:

\begin{lemma}[Lemma 2.23 \cite{BBD}]\label{lem2.23}
Let $(X_{k})_{k\in \Z}$ be a $1-$approximating functional with constants $(a_{k})_{k\geq0}$ of an absolutely regular process with mixing coefficients $(\beta(k))_{k\geq 0}$. Suppose moreover that $E X_{i}=0$ and that one of the following two conditions holds:
\begin{enumerate}
\item $X_{0}$ is bounded a.s. and $\sum^{\infty}_{k=0}(a_{k}+\beta(k))<\infty.$
\item $E|X_{0}|^{2+\delta}<\infty$ and $\sum^{\infty}_{k=0}(a^{\frac{\delta}{1+\delta}}_{k}+\beta^{\frac{\delta}{1+\delta}}(k))<\infty.$
\end{enumerate}
Then, as $N\rightarrow\infty$, 
\begin{equation}
\frac{1}{N}E S^{2}_{N}\rightarrow E X^{2}_{0}+2\sum^{\infty}_{j=1}E(X_{0}X_{j})
\end{equation}
and the sum on the r.h.s. converges absolutely.
\end{lemma}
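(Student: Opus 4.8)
The plan is to reduce everything to the absolute summability of the autocovariance sequence $\gamma(k):=E(X_0X_k)$, and then to read off both the limit and the absolute convergence of its series from a Ces\`aro argument. Since $(X_k)$ is stationary — being a functional of the stationary process $(Z_n)$ — and centred, we have $E(X_iX_j)=\gamma(|i-j|)$, and counting diagonals gives
\begin{equation}
\frac1N E S_N^2=\gamma(0)+2\sum_{k=1}^{N-1}\Bigl(1-\frac kN\Bigr)\gamma(k).
\end{equation}
Assuming $\sum_{k\ge1}|\gamma(k)|<\infty$, each summand is dominated by $|\gamma(k)|$ and tends to $\gamma(k)$ as $N\to\infty$, so by dominated convergence $\frac1N E S_N^2\to\gamma(0)+2\sum_{k=1}^\infty\gamma(k)$, which is exactly the asserted limit $E X_0^2+2\sum_{j=1}^\infty E(X_0X_j)$ together with absolute convergence of the series. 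Hence the whole statement reduces to proving $\sum_k|\gamma(k)|<\infty$.

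To bound $\gamma(k)$ I would use the standard \emph{approximate, then decorrelate} device. Fix $k$, set $m=[k/3]$, and let $\hat X_0=E(X_0\mid Z_{-m},\dots,Z_m)$ and $\hat X_k=E(X_k\mid Z_{k-m},\dots,Z_{k+m})$; both are centred by the tower property and are measurable on two blocks of the underlying absolutely regular process separated by a gap of at least $k-2m\ge k/3$. Splitting $X_0=\hat X_0+(X_0-\hat X_0)$ and likewise for $X_k$ gives
\begin{equation}
\gamma(k)=\Cov(\hat X_0,\hat X_k)+E\bigl((X_0-\hat X_0)X_k\bigr)+E\bigl(\hat X_0(X_k-\hat X_k)\bigr),
\end{equation}
where the first term is a covariance of functions of well-separated blocks, hence controlled by the covariance inequality for absolutely regular sequences through $\beta([k/3])$, while the other two are approximation errors controlled by the defining bound $E|X_0-\hat X_0|\le a_m$ of the $1$-approximating functional. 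Under condition~(1) this is immediate: with $M=\|X_0\|_\infty$ one obtains $|\Cov(\hat X_0,\hat X_k)|\le C\beta([k/3])$ and $|E((X_0-\hat X_0)X_k)|\le M a_{[k/3]}$, whence $|\gamma(k)|\le C(\beta([k/3])+a_{[k/3]})$, which is summable after reindexing precisely because $\sum_k(a_k+\beta(k))<\infty$.

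The main obstacle is condition~(2), where no sup-norm bounds are available and the exponents of the summability hypothesis must be manufactured by H\"older's inequality. For the approximation error I would interpolate the $L^1$ bound $\|X_0-\hat X_0\|_1\le a_m$ against the uniform bound $\|X_0-\hat X_0\|_{2+\delta}\le 2\|X_0\|_{2+\delta}$: pairing $X_k$ in $L^{2+\delta}$ forces the H\"older exponent $(2+\delta)/(1+\delta)$ on the other factor, and interpolation yields $\|X_0-\hat X_0\|_{(2+\delta)/(1+\delta)}\le C a_m^{\delta/(1+\delta)}$, so that $|E((X_0-\hat X_0)X_k)|\le C a_{[k/3]}^{\delta/(1+\delta)}$ — exactly the exponent in the hypothesis. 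For the covariance term the analogous step is Davydov's covariance inequality applied with $(2+\delta)$-th moments (using $\|\hat X_0\|_{2+\delta}\le\|X_0\|_{2+\delta}$ from Jensen), which bounds it by a positive power of $\beta([k/3])$. Summing the resulting estimate for $|\gamma(k)|$ over $k$ then closes the argument under hypothesis~(2). The delicate points are making the H\"older and Davydov exponents line up exactly with the stated summability condition, and verifying that the conditional expectations are genuinely measurable on the separated index blocks so that the mixing inequality is applicable.
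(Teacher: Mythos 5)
First, a point of reference: the paper itself contains no proof of this statement; it is quoted in the appendix as Lemma~2.23 of Borovkova, Burton and Dehling (2001), so the only meaningful comparison is with that original proof. Your route --- reduce via the Ces\`aro identity $\frac{1}{N}ES_N^2=\gamma(0)+2\sum_{k=1}^{N-1}(1-k/N)\gamma(k)$ to absolute summability of the autocovariances $\gamma(k)=E(X_0X_k)$, then bound $\gamma(k)$ by replacing $X_0,X_k$ with conditional expectations on blocks of radius $m=[k/3]$ (separated by a gap of at least $k/3$) and applying the covariance inequality for absolutely regular processes to the block-measurable parts, with the $L^1$ approximation bound $a_m$ handling the remainders --- is exactly the standard argument and is the one used in the cited source. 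Under condition~(1) your estimate $|\gamma(k)|\leq C(\beta([k/3])+a_{[k/3]})$ is correct, and summability after reindexing is immediate, so that half of the proof is complete.

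Under condition~(2), however, there is a genuine gap at precisely the point you flag as ``delicate'': the exponents do not line up. Your interpolation step is fine and gives the approximation terms the bound $Ca_{[k/3]}^{\delta/(1+\delta)}$, matching the hypothesis. But Davydov's (or Yoshihara's) inequality applied to two factors controlled only in $L^{2+\delta}$ yields $|\Cov(\hat X_0,\hat X_k)|\leq C\beta^{\delta/(2+\delta)}([k/3])$, and no choice of H\"older exponents can do better, since $1-1/p-1/q$ is maximized at $p=q=2+\delta$; the exponent $\delta/(2+\delta)$ is essentially sharp for variables with only $2+\delta$ moments. Since $\beta(k)\leq 1$ and $\delta/(2+\delta)<\delta/(1+\delta)$, one has $\beta^{\delta/(1+\delta)}(k)\leq\beta^{\delta/(2+\delta)}(k)$, so the stated hypothesis $\sum_k\beta^{\delta/(1+\delta)}(k)<\infty$ does \emph{not} imply $\sum_k\beta^{\delta/(2+\delta)}(k)<\infty$, and your final summation does not close. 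The resolution is that the statement as transcribed in this paper appears to carry a typo: in the original Lemma~2.23 of Borovkova, Burton and Dehling, condition~(ii) reads $\sum_k\bigl(a_k^{\delta/(1+\delta)}+\beta^{\delta/(2+\delta)}(k)\bigr)<\infty$, consistent with the pattern of Lemma~2.24 quoted in the same appendix, where $4+\delta$ moments come with exponents $\delta/(3+\delta)$ on $a_k$ and $\delta/(4+\delta)$ on $\beta(k)$. Under that corrected hypothesis your outline closes verbatim; and note that for the applications in this paper only the bounded case~(1) is ever invoked, since the kernels are assumed bounded.
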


\begin{lemma}[Lemma 2.24 \cite{BBD}]\label{lem2.24}
Let $(X_{k})_{k\in \Z}$ be a $1-$approximating functional with constants $(a_{k})$ of an absolutely regular process with mixing coefficients $(\beta(k))_{k\geq 0}$.\;  Suppose moreover that $E X_{i}=0$ and that one of the following two conditions holds:
\begin{enumerate}
\item $X_{0}$ is bounded a.s. and $\sum^{\infty}_{k=0}k^{2}(a_{k}+\beta(k))<\infty.$
\item $E|X_{0}|^{4+\delta}<\infty$ and $\sum^{\infty}_{k=0}k^{2}(a^{\frac{\delta}{3+\delta}}_{k}+\beta^{\frac{\delta}{4+\delta}}(k))<\infty.$
\end{enumerate}
Then there exits a constant $C$ such that
\begin{equation}
E S^{4}_{N}\leq CN^{2}.
\end{equation}
\end{lemma}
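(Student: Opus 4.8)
The plan is to expand the fourth moment as a quadruple sum and to control each summand by splitting it across its largest index gap. Writing $S_N=\sum_{i=1}^N X_i$ and using $EX_i=0$, I would start from
\[
E S_N^4 = \sum_{i_1,i_2,i_3,i_4=1}^{N} E\left(X_{i_1}X_{i_2}X_{i_3}X_{i_4}\right).
\]
By stationarity and the symmetry of the summand under permutations of its four indices, it suffices, up to the combinatorial factor $4!$, to bound the sum over ordered tuples $1\le i_1\le i_2\le i_3\le i_4\le N$. I would parametrize these by $i_1$ together with the consecutive gaps $d_1=i_2-i_1$, $d_2=i_3-i_2$, $d_3=i_4-i_3$, and set $m=\max(d_1,d_2,d_3)$ (breaking ties by a fixed convention). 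The whole argument then rests on factorizing $E(X_{i_1}X_{i_2}X_{i_3}X_{i_4})$ across the largest gap and estimating the resulting error.

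The key tool is a covariance inequality for $1$-approximating functionals of an absolutely regular process. Given the two groups of variables lying on either side of the largest gap, of size $m$, I would approximate each $X_i$ by $E(X_i\mid Z_{i-r},\ldots,Z_{i+r})$ with $r=[m/3]$; by definition of a $1$-approximating functional this costs $a_r$ in $L^1$, and under the boundedness assumption (1) this error propagates through the bounded products at total cost $C\,a_{[m/3]}$. The truncated groups are then measurable with respect to blocks of $(Z_n)$ separated by at least $[m/3]$ indices, so absolute regularity bounds the covariance of the (bounded) truncations by $C\,\beta([m/3])$. Altogether this gives, for $U,V$ the two groups of factors,
\[
\left|E(UV)-E(U)E(V)\right|\le C\left(a_{[m/3]}+\beta([m/3])\right),
\]
exactly the covariance inequality of Borovkova, Burton and Dehling \cite{BBD}. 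Under the moment assumption (2) the same estimate holds with the stated exponents, Hölder's inequality replacing the boundedness step.

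With this inequality I would distinguish the position of the largest gap. When $m=d_2$ sits in the middle, splitting into $U=X_{i_1}X_{i_2}$ and $V=X_{i_3}X_{i_4}$ produces the factorized main term $\Cov(X_{i_1},X_{i_2})\,\Cov(X_{i_3},X_{i_4})$ plus an error of order $a_{[m/3]}+\beta([m/3])$. Summing the main term, the outer gaps $d_1,d_3$ contribute the finite sums $\sum_d|\Cov(X_0,X_d)|<\infty$ (which converge already under the weaker hypothesis of Lemma~\ref{lem2.23}), while $i_1$ and the unconstrained middle gap $d_2$ each contribute a factor $N$, yielding the leading $O(N^2)$ term. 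When the largest gap is at an end, $m=d_1$ or $m=d_3$, the factorized term contains the isolated factor $E(X_{i_1})=0$ (respectively $E(X_{i_4})=0$) and therefore vanishes, so only the covariance error survives. In every case the two gaps below $m$ range over at most $m$ values each, so summing over $i_1$ (a factor $N$) and over the three gaps bounds the total error by $CN\sum_{m\ge1}m^2\left(a_{[m/3]}+\beta([m/3])\right)=O(N)$, the weighted summability $\sum_k k^2(a_k+\beta(k))<\infty$ being precisely what absorbs the two free smaller gaps through the weight $k^2$. Adding the $O(N^2)$ main term to the $O(N)$ errors gives $ES_N^4\le CN^2$.

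The main obstacle is the covariance inequality itself: controlling the interaction of the truncation error with the boundedness (or the moment bound), and checking that the two conditioning windows stay separated by order $m$ so that absolute regularity applies with argument $[m/3]$. Once this and the weighted summability are in hand, the remaining combinatorial bookkeeping — the consistent handling of ties in the ordering and of coincident indices, whose contributions are simply absorbed into $C$ — is routine.
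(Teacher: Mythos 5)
The paper itself does not prove this lemma: it is quoted verbatim in the appendix of auxiliary results from Borovkova, Burton and Dehling \cite{BBD}, and your argument reproduces essentially that source's proof --- the max-gap decomposition of the quadruple sum, the covariance inequality for bounded $1$-approximating functionals of an absolutely regular process (conditioning windows of radius $[m/3]$ plus absolute regularity across the gap), the vanishing of the factorized term when the largest gap is at an end because $EX_i=0$, the $O(N^2)$ main term from two absolutely summable covariance factors when the largest gap is in the middle, and the $O(N)$ error sum absorbed by the weight $k^2$ in the summability condition. Your proposal is correct and takes the same approach as the cited proof, so there is nothing substantive to add.
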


\begin{theorem}[Theorem 4 \cite{BBD}]\label{The4} 
Let $(X_{k})_{k\in\Z}$ be a $1-$approximating functional with constants $(a_{k})_{k\geq 0}$ of an absolutely regular process with mixing coefficients $(\beta(k))_{k\geq 0}$. Suppose moreover that $E X_{i}=0$, $E|X_{0}|^{4+\delta}<\infty$ and that  
\begin{equation}\label{summa}
\sum^{\infty}_{k=0}k^{2}(a^{\frac{\delta}{3+\delta}}_{k}+\beta^{\frac{\delta}{4+\delta}}(k))<\infty,
\end{equation}
for some $\delta>0$. Then, as $n\rightarrow \infty,$ 
\begin{equation}\frac{1}{\sqrt{n}}\sum^{n}_{i=1}X_{i}\rightarrow \mathcal{N}(0,\sigma^{2}),
\end{equation}
where $\sigma^{2}=E X^{2}_{0}+2\sum^{\infty}_{j=1}E(X_{0}X_{j}).$  In case $\sigma^{2}=0$, $\mathcal{N}(0,0)$ denotes the point mass at the origin. If $X_{0}$ is bounded, the CLT continues to hold if $(\ref{summa})$ is replaced by the condition that $\sum^{\infty}_{k=0}k^{2}(a_{k}+\beta(k))<\infty$.
\end{theorem}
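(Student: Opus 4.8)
The plan is to prove the central limit theorem by the classical big-block/small-block method, using the coupling of Theorem~\ref{The3} to replace the big blocks by independent copies and then invoking the Lyapunov central limit theorem; the variance and fourth-moment estimates needed along the way are exactly Lemma~\ref{lem2.23} and Lemma~\ref{lem2.24}, so the substance of the work lies in organising the blocking and controlling the coupling error. As a first step, Lemma~\ref{lem2.23} gives $\frac1n E(\sum_{i=1}^n X_i)^2\to\sigma^2$ with the defining series converging absolutely. If $\sigma^2=0$ this already yields $L^2$-convergence of $n^{-1/2}\sum_{i=1}^n X_i$ to $0$, hence convergence in distribution to the point mass, and we may henceforth assume $\sigma^2>0$.

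Next I would fix a big-block length $p_n\to\infty$ with $p_n=o(n)$ and a gap length $q_n\to\infty$ with $q_n=o(p_n)$, splitting $\{1,\dots,n\}$ into $r_n\approx n/p_n$ alternating big blocks and gaps; write $U_{n,k}$ and $V_{n,k}$ for the corresponding partial sums. The gaps are negligible: the variance bound of Lemma~\ref{lem2.23} together with the absolute summability of the covariances gives $\frac1n\Var(\sum_k V_{n,k})=O(q_n/p_n)\to0$, so by Slutsky it suffices to treat $n^{-1/2}\sum_k U_{n,k}$. By Theorem~\ref{The3} there exist independent blocks $(U'_{n,k})$ with the same blockwise marginals as $(U_{n,k})$ and such that $P(|U_{n,k}-U'_{n,k}|>2\alpha_{q_n})\le\beta(q_n)+2\alpha_{q_n}$, where $\alpha_{q_n}=(2\sum_{k\ge q_n/3}a_k)^{1/2}$. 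Summing these over the $r_n$ blocks shows that the coupling error is $O(\frac{n}{p_n}(\beta(q_n)+\alpha_{q_n}))$, which the summability condition~(\ref{summa}) drives to $0$ for a polynomial choice such as $p_n=n^{1-\gamma}$, $q_n=n^{\gamma/2}$ with $\gamma<1/2$. Hence $n^{-1/2}\sum_k U_{n,k}$ and $n^{-1/2}\sum_k U'_{n,k}$ share the same limit in distribution.

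It then remains to apply the Lyapunov central limit theorem to the independent array $(U'_{n,k})$. Their variances add correctly, $\frac1n\sum_k\Var(U'_{n,k})=\frac1n\sum_k\Var(U_{n,k})\to\sigma^2$ by Lemma~\ref{lem2.23} applied to each contiguous big block, while the fourth-moment bound $E\,U_{n,k}^4\le C\,p_n^2$ of Lemma~\ref{lem2.24} makes the Lyapunov ratio of order $\frac1{n^2}\sum_k E\,U_{n,k}^4\le C\,r_n p_n^2/n^2\le C\,p_n/n\to0$. Thus Lyapunov's condition with exponent $4$ holds and $n^{-1/2}\sum_k U'_{n,k}\to\mathcal N(0,\sigma^2)$, which proves the claim; the bounded case is identical, the fourth moment then being automatic so that Lemma~\ref{lem2.24} applies under $\sum_k k^2(a_k+\beta(k))<\infty$.

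The step I expect to be the main obstacle is the simultaneous balancing of the two length scales: the gaps must be long enough that the per-block error $\beta(q_n)+\alpha_{q_n}$ outweighs the factor $n/p_n$, short enough ($q_n=o(p_n)$) that the gaps stay negligible, and accompanied by $p_n=o(n)$ for the Lyapunov ratio. The $k^2$-weighted summability in~(\ref{summa}) is precisely what makes these three demands compatible through a polynomial choice of block lengths, since it is exactly the hypothesis under which both the fourth-moment bound of Lemma~\ref{lem2.24} and a sufficiently rapid decay of $\beta$ and of the tail sums $\sum_{k\ge q}a_k$ are available.
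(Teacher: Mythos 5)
Your overall architecture is the right one, and in fact it is the standard one: the paper itself does not prove this statement (it is imported verbatim as Theorem~4 of Borovkova, Burton and Dehling (2001)), but the closest in-paper argument, the proof of Proposition~\ref{invprin}, uses exactly your scheme --- big blocks separated by gaps, coupling of the big blocks to independent copies via Theorem~\ref{The3}, negligibility of gaps and remainders through the second-moment bound derived from Lemma~\ref{lem2.23}, and a CLT for the independent blocks with the Lyapunov condition supplied by the fourth-moment bound of Lemma~\ref{lem2.24}. Your treatment of the degenerate case $\sigma^2=0$, of the gap sums, of the variance additivity, and of the Lyapunov ratio $Cp_n/n\to 0$ is correct.

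There is, however, a genuine gap in the coupling step: your block-length calibration does not make the coupling error vanish. Under \eqref{summa} (or under $\sum_k k^2(a_k+\beta(k))<\infty$ in the bounded case) the only available tail estimate is $\sum_{k\geq L}a_k\leq L^{-2}\sum_{k\geq L}k^2a_k=o(L^{-2})$, hence $\alpha_L=\bigl(2\sum_{k\geq L}a_k\bigr)^{1/2}=o(L^{-1})$, and this is essentially sharp (take $a_k=k^{-3}(\log k)^{-2}$, which is admissible). The square root is the culprit: while $\beta(q_n)$ decays at least like $q_n^{-3}$, the term $\alpha_{q_n}$ decays only like $q_n^{-1}$, so it is the binding one in your union bound $O\bigl(\tfrac{n}{p_n}(\beta(q_n)+\alpha_{q_n})\bigr)$. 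With your choice $p_n=n^{1-\gamma}$, $q_n=n^{\gamma/2}$ this bound behaves like $n^{\gamma}\cdot n^{-\gamma/2}=n^{\gamma/2}\rightarrow\infty$ for every $\gamma\in(0,1/2)$, so the step ``which the summability condition \eqref{summa} drives to $0$'' fails. What is actually needed is $q_n\gg n/p_n$ together with $q_n\ll p_n$, forcing $p_n\gg\sqrt{n}$; for instance the choice used in the proof of Proposition~\ref{invprin}, $p_n=[n^{3/4}]$ and $q_n=[n^{1/2}]$, gives $r_n(\beta(q_n)+\alpha_{q_n})=o(n^{1/4}\cdot n^{-1/2})\rightarrow 0$. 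With that single correction (and no other change) the rest of your argument goes through.
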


An important tool to derive asymptotic results for weakly dependent data are coupling methods, we will need this method to prove the invariance principle (Proposition \ref{invprin}).


\begin{theorem}[Theorem 3 \cite{BBD}]\label{The3}
Let $(X_{n})_{n\in\N}$ be a $1-$approximating functional with summable constants $(a_{k})_{k\geq 0}$ of an absolutely regular process with mixing rate $(\beta(k))_{k\geq0}$.\;  Then given integers $K,L$ and $N$, we can approximate the sequence of $(K+2L,N)-$blocks $(B_{s})_{s\geq 1}$ by a sequence of independent blocks $(B'_{s})_{s\geq 1}$ with the same marginal distribution in such a way that 
\begin{equation}
P(||B_{s}-B'_{s}||\leq 2\alpha_{L})\geq 1-\beta(K)-2\alpha_{L},
\end{equation}
where
$\alpha_{L}:=\left(2\sum^{\infty}_{l=L}a_{l}\right)^{1/2}.$ 
\end{theorem}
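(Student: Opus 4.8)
The plan is to isolate the two sources of error in the coupling — the weak dependence of $(Z_n)$, measured by $\beta(K)$, and the functional approximation of the $X_i$ by finitely-window-measurable random variables, measured by $\alpha_L$ — and to treat them with separate tools: the classical coupling (reconstruction) lemma of Berbee for absolutely regular sequences for the first, and the $1$-approximating property for the second. The obstacle that the whole $(K+2L,N)$-blocking is designed to overcome is that each $X_i=f((Z_{i+n})_{n\in\Z})$ depends on the \emph{entire} sequence $(Z_n)$, so the blocks $B_s$ are not measurable with respect to any finite segment of $(Z_n)$, and Berbee's lemma cannot be applied to them directly. The buffer of width $2L$ built into the blocking, together with the constant $\alpha_L$, is exactly what repairs this.

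First I would replace each block by a finitely-window-measurable surrogate. For a block occupying an index set $I_s$ of length $N$, let $\mathcal{Z}_s$ be the segment of $(Z_n)$ spanning $I_s$ together with a buffer of width $L$ at each end, and for $i\in I_s$ put $\tilde X_i=E(X_i\mid Z_{i-d_i},\ldots,Z_{i+d_i})$, where $d_i$ is the distance from $i$ to the nearer endpoint of $\mathcal{Z}_s$. Then $d_i\geq L$, the symmetric window $\{i-d_i,\ldots,i+d_i\}$ lies inside $\mathcal{Z}_s$, and the surrogate block $\tilde B_s$ built from the $\tilde X_i$ is $\sigma(\mathcal{Z}_s)$-measurable. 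The $1$-approximating property gives $E|X_i-\tilde X_i|\leq a_{d_i}$, and since on each half of the block the distances $d_i$ run through $L,L+1,\ldots$, the triangle inequality and summation yield $E\|B_s-\tilde B_s\|\leq\sum_{i\in I_s}a_{d_i}\leq 2\sum_{l\geq L}a_l=\alpha_L^2$. Markov's inequality then gives $P(\|B_s-\tilde B_s\|>\alpha_L)\leq\alpha_L$.

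Next I would couple the surrogates. Because consecutive blocks are separated by $K+2L$ indices, the enlarged segments $\mathcal{Z}_s$ are separated by at least $K$, so absolute regularity bounds the $\beta$-coefficient between $\sigma(\mathcal{Z}_s)$ and the past segments by $\beta(K)$. Applying Berbee's reconstruction lemma recursively produces \emph{independent} copies $\tilde B'_s$ with the same marginal laws as the $\tilde B_s$ and with $P(\tilde B_s\neq\tilde B'_s)\leq\beta(K)$ for every $s$. To recover the correct full marginal I would then draw $B'_s$ from the regular conditional distribution of $B_s$ given $\tilde B_s$, evaluated at $\tilde B'_s$ and using auxiliary randomness independent across $s$; this makes $(B'_s,\tilde B'_s)$ equal in law to $(B_s,\tilde B_s)$, so $B'_s$ has the law of $B_s$, the family $(B'_s)_s$ is independent, and $\|B'_s-\tilde B'_s\|$ has the same distribution as $\|B_s-\tilde B_s\|$, whence $P(\|B'_s-\tilde B'_s\|>\alpha_L)\leq\alpha_L$ as well.

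Finally, on the intersection of the three good events $\{\|B_s-\tilde B_s\|\leq\alpha_L\}$, $\{\tilde B_s=\tilde B'_s\}$ and $\{\|\tilde B'_s-B'_s\|\leq\alpha_L\}$, the triangle inequality gives $\|B_s-B'_s\|\leq 2\alpha_L$, while the complementary event has probability at most $\beta(K)+2\alpha_L$, which is the assertion. I expect the genuine work to lie in two places: the telescoping estimate of the first step, where the per-point windows must be arranged so that each surrogate block stays measurable with respect to its own enlarged segment while still enjoying the clean bound $a_{d_i}$; and the marginal-correction step, where one must restore the law of $B_s$ after coupling only its surrogate $\tilde B_s$, without destroying the independence produced by Berbee's lemma. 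The recursive application of that lemma, needed to obtain mutual rather than merely pairwise independence while keeping the per-block error at $\beta(K)$, is a further point requiring care.
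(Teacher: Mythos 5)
Your proposal is correct, and it is essentially the proof of this result in its original source: the statement is not proved in this paper at all (it appears in the appendix, quoted as Theorem~3 of Borovkova, Burton and Dehling (2001)), so the only meaningful comparison is with that original argument, which proceeds exactly along your lines. In particular, your three key devices --- the variable-width conditional-expectation surrogates whose errors telescope to $E\|B_s-\tilde B_s\|\le 2\sum_{l\ge L}a_l=\alpha_L^2$ followed by Markov's inequality, the recursive Berbee-type coupling applied to the surrogate blocks (which, unlike the $B_s$ themselves, are measurable with respect to segments of $(Z_n)$ separated by gaps of length $K$), and the restoration of the correct blockwise marginals by drawing from the regular conditional distribution of $B_s$ given $\tilde B_s$ with auxiliary randomness independent across blocks --- are precisely the ingredients of the cited proof, and your final union bound $\alpha_L+\beta(K)+\alpha_L$ reproduces the stated estimate.
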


In statistical application, the question of how to estimate $\sigma^2$ is important. In the situation when the observations are a functional of $\alpha-$mixing process, Dehling et al. \cite{DS} propose the estimation of the variance of partial sums of dependent processes by the subsampling estimator
\begin{equation}
\hat{D}_{n}=\frac{1}{[n/l_{n}]}\sqrt{\frac{\pi}{2}}\sum^{[n/l_{n}]}_{i=1}\frac{|\hat{T}_{i}(l_{n})-l_{n}\tilde{U}_{n}|}{\sqrt{l_{n}}}
\end{equation}
with $\hat{T}_{i}(l)=\sum^{il}_{j=(i-1)l+1}F_{n}(X_{j})$ and $\tilde{U}_{n}=\frac{1}{n}\sum^{n}_{j=1}F_{n}(X_{j})$, where $F_{n}(\cdot)$ is the empirical distribution function ({\itshape e.d.f.}).

\begin{theorem}[Theorem 1.2 \cite{DS}]\label{varestima}
Let $(X_{k})_{k\geq 1}$ be a stationary, $1$-approximating functional of an $\alpha-$mixing processes. Suppose that for some $\delta>0$, $E|X_{1}|^{2+\delta}<\infty$, and that the mixing coefficients $(\alpha_{k})_{k\geq 1}$ and the approximation constants $(a_{k})_{k\geq 1}$ satisfy 
\begin{equation}
\sum^{\infty}_{k=1}(\alpha_{k})^{\frac{2}{2+\delta}}<\infty,\quad \sum^{\infty}_{k=1}(a_{k})^{\frac{1+\delta}{2+\delta}}<\infty. 
\end{equation}
In addition, we assume that $F$ is Lipschitz-continuous, that $\alpha_{k}=O(n^{-8})$ and that $a_{m}=O(m^{-12})$. Then, as $n\rightarrow\infty$, $l_n\rightarrow\infty$ and $l_{n}=o(\sqrt{n})$, we have $\hat{D}_{n}\longrightarrow\sigma$ in $L_{2}.$ 
\end{theorem}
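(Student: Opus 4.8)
The plan is to prove $L_2$-consistency by establishing separately that $E[\hat D_n]\to\sigma$ and $\Var(\hat D_n)\to 0$; together these give $E(\hat D_n-\sigma)^2=\Var(\hat D_n)+(E[\hat D_n]-\sigma)^2\to 0$. The guiding heuristic is that each non-overlapping block statistic $(\hat T_i(l_n)-l_n\tilde U_n)/\sqrt{l_n}$ behaves like a centred and normalised partial sum of the stationary sequence $(F(X_j))_{j\ge1}$, and is therefore, by a central limit theorem for functionals of $\alpha$-mixing sequences (in the spirit of Theorem~\ref{The4} applied to $F(X_j)-\theta$, $\theta:=E[F(X_1)]$), asymptotically $\mathcal N(0,\sigma^2)$ with $\sigma=\sigma_1$ as in (\ref{eq:var-1}). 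Since $E|\mathcal N(0,\sigma^2)|=\sigma\sqrt{2/\pi}$, the factor $\sqrt{\pi/2}$ is precisely what turns the expected absolute block into $\sigma$, while averaging over the $m:=[n/l_n]\to\infty$ blocks suppresses the fluctuations.

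First I would carry out two preliminary reductions. To replace $F_n$ by the true $F$, write $T_i(l):=\sum_{j=(i-1)l+1}^{il}F(X_j)$ and $U_n:=\frac1n\sum_{j=1}^n F(X_j)$; then $|\hat T_i(l_n)-l_n\tilde U_n-(T_i(l_n)-l_nU_n)|\le 2\,l_n\,\|F_n-F\|_\infty$, so that $|\hat D_n-D_n|\le C\sqrt{l_n}\,\|F_n-F\|_\infty$, where $D_n$ is the estimator built from $F$. The strong polynomial rates $\alpha_k=O(k^{-8})$ and $a_m=O(m^{-12})$, together with Lipschitz continuity of $F$, are exactly what is needed to invoke a Glivenko--Cantelli bound of the form $E\|F_n-F\|_\infty^2\le C/n$ for functionals of $\alpha$-mixing sequences; then $E|\hat D_n-D_n|^2\le C\,l_n/n\to0$ because $l_n=o(\sqrt n)$, and it suffices to treat $D_n$. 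A similar reduction removes the global centring: since $\sqrt{l_n}\,(U_n-\theta)\to 0$ in $L_2$ by Lemma~\ref{lem2.23} applied to $F(X_j)-\theta$ together with $l_n/n\to0$, $D_n$ may be replaced, up to an $L_2$-negligible term, by $\bar D_n:=\frac{1}{m}\sqrt{\pi/2}\sum_{i=1}^m|W_i|$, where $W_i:=l_n^{-1/2}\sum_{j=(i-1)l_n+1}^{il_n}(F(X_j)-\theta)$.

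For the mean I would use the central limit theorem to get $W_1\claw \mathcal N(0,\sigma^2)$, upgraded to convergence of first absolute moments by uniform integrability: since $F(X_j)$ is bounded, a fourth-moment bound $E W_i^4\le C$ holds uniformly in $l_n$ (the $\alpha$-mixing analogue of Lemma~\ref{lem2.24}), so $\{|W_i|\}$ is uniformly integrable and $E|W_1|\to\sigma\sqrt{2/\pi}$, whence $E[\bar D_n]=\sqrt{\pi/2}\,E|W_1|\to\sigma$. For the variance I would expand $\Var(\bar D_n)=m^{-2}\sum_{i,i'}\Cov(\sqrt{\pi/2}\,|W_i|,\sqrt{\pi/2}\,|W_{i'}|)$. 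The diagonal and the adjacent-pair terms contribute $O(1/m)=O(l_n/n)\to0$ via $\|W_i\|_2\le C$. For the off-diagonal terms with $d:=|i-i'|\ge 2$, the blocks are separated by at least $(d-1)l_n$ indices; approximating each block functional of the $X$'s by a block functional of the underlying $\alpha$-mixing process truncated at radius $[l_n/3]$, at an $L_2$-cost governed by $a_{[l_n/3]}$, and then applying the Davydov covariance inequality for $\alpha$-mixing sequences with fourth moments (Ibragimov and Linnik 1970) yields $|\Cov(|W_i|,|W_{i'}|)|\le C\,\alpha((d-1)l_n)^{1/2}+r_{l_n}$, where $r_{l_n}\to0$ is the coupling error, negligible under $a_m=O(m^{-12})$. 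With $\alpha_k=O(k^{-8})$ one has $\sum_{d\ge2}\alpha((d-1)l_n)^{1/2}=O(l_n^{-4})$, so the full off-diagonal contribution is $o(1/m)$ as well, and $\Var(\bar D_n)\to0$.

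I expect the main obstacle to be the variance bound, and within it the decorrelation of the statistics $|W_i|$. Two difficulties combine there: the observations are only functionals of the mixing process, so the covariance inequality cannot be applied directly but only after coupling each block to a finitely-dependent approximation controlled by $a_k$; and the global centring $U_n$ couples all blocks, which is why the reduction to $\bar D_n$ must be performed before any blockwise covariance estimate is attempted. The empirical-process replacement in the first step is technically the most demanding single input, but under the stated polynomial rates it can be imported as a known bound $E\|F_n-F\|_\infty^2\le C/n$ for functionals of $\alpha$-mixing sequences.
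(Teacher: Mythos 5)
Be aware first that the paper does not prove this statement: Theorem~\ref{varestima} sits in the appendix of auxiliary results and is quoted verbatim as Theorem 1.2 of \cite{DS}, used only in the remark on variance estimation following Theorem~\ref{th:CP-test}. There is therefore no in-paper proof to compare yours against; the relevant benchmark is the proof in \cite{DS}, and your sketch does reproduce its two-stage architecture: first treat the oracle estimator built from the true $F$ (blocking; a CLT for a single block plus uniform integrability to get $E|W_1|\to\sigma\sqrt{2/\pi}$, which is exactly what the factor $\sqrt{\pi/2}$ compensates; coupling plus a covariance inequality to make the variance vanish), then reduce the empirical-$F$ statistic to the oracle one via $|\hat D_n - D_n|\le C\sqrt{l_n}\,\|F_n-F\|_\infty$. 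Your accounting of where each hypothesis enters is also right: the Lipschitz continuity of $F$ and the rates $\alpha_k=O(k^{-8})$, $a_m=O(m^{-12})$ are precisely the hypotheses of the empirical-process result of Borovkova, Burton and Dehling \cite{BBD} needed for that reduction, and your variance bookkeeping (diagonal $O(1/m)$, off-diagonal $O(l_n^{-4})$ per row after coupling) is quantitatively consistent.

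The genuine gap is the step you propose to ``import as a known bound'', namely $E\|F_n-F\|_\infty^2\le C/n$ for $1$-approximating functionals of $\alpha$-mixing processes. That is a Dvoretzky--Kiefer--Wolfowitz-type moment bound, standard for i.i.d.\ data but not what the stated hypotheses deliver in this dependent setting: what the empirical-process theorem of \cite{BBD} gives is weak convergence of $\sqrt{n}(F_n-F)$ in $D$, hence only $\|F_n-F\|_\infty=O_P(n^{-1/2})$. Tightness of the empirical process does not by itself yield a uniform second-moment bound on its supremum, and your reduction genuinely needs a moment rate, namely $l_n\,E\|F_n-F\|_\infty^2\to 0$ (with $l_n=o(\sqrt{n})$ it would suffice to have $E\|F_n-F\|_\infty^2=o(n^{-1/2})$, weaker than $C/n$ but still quantitative, since the crude bound $\|F_n-F\|_\infty\le 1$ gives nothing against the factor $l_n$). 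To close this you must either extract a bound of the form $E\sup_x|\sqrt{n}(F_n-F)(x)|^2\le C$ from the chaining and moment estimates inside the proof of the empirical-process CLT, or combine boundedness with a quantitative tail inequality; neither is a one-line citation, and this is the technical core of the reduction. A second, minor point that you flag yourself: Lemmas~\ref{lem2.23} and \ref{lem2.24} and Theorem~\ref{The4} are stated in the paper for functionals of \emph{absolutely regular} processes, whereas here the underlying process is only assumed $\alpha$-mixing, so you need the (existing, but different) $\alpha$-mixing analogues of these moment bounds and of the CLT.
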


To deal with the degenerate kernel $g$, we need to find upper bounds for $E\left(g(X_{i_{1}},X_{j_{1}})g(X_{i_{2}},X_{j_{2}})\right)$, in terms of the maximal distance among the indices. Due to $1\leq i_{1}<i_{2}\leq [n\lambda]$ and $[n\lambda]+1\leq j_{1}<j_{2}\leq n$, w.l.o.g. $i_{1}< i_{2}< j_{1}< j_{2}$.

\begin{lemma}[Proposition 6.1 in \cite{DF}]\label{first}
Let $(X_{n})_{n\geq 1}$ be a $1-$approximating functional with constants $(a_{k})_{k\geq 1}$ of an 
absolutely regular process with mixing coefficients $(\beta(k))_{k\geq 1}$ and let $g(x,y)$ be a $1-$continuous bounded degenerate kernel. Then we have
\begin{equation}
  |E(g(X_{i_{1}},X_{j_{1}})g(X_{i_{2}},X_{j_{2}}))|\leq 4S\phi(a_{[k/3]})+8S^{2}(\sqrt{a_{[k/3]}}+\beta([k/3]))
  \label{bound1}  
\end{equation}
where $S=|\sup_{x,y}g(x,y)|$ and $k=\max\left\{i_{2}-i_{1},j_{1}-i_{2}, j_{2}-j_{1}\right\}$
\end{lemma}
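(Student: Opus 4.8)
The plan is to bound $E(g(X_{i_1},X_{j_1})g(X_{i_2},X_{j_2}))$ by combining the degeneracy of $g$ with a decoupling of the outermost variable in the wider of the two index-pairs. Since the index structure forces $i_1<i_2<j_1<j_2$, set $m=[k/3]$ and suppose, as the representative case, that the maximal gap is $i_2-i_1=k$; the case $j_2-j_1=k$ is symmetric, with the roles of the first and second argument of $g$ interchanged. In this case the variable $X_{i_1}$ can be separated from the other three indices by at least $m$ coordinates, since the $Z$-window of $i_1$ ends at $i_1+m$ while the nearest competing window begins at $i_2-m$, and $(i_2-m)-(i_1+m)=k-2m\ge m$ because $3[k/3]\le k$.

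First I would introduce, on a suitably enlarged space, a variable $X_{i_1}'$ that has the marginal distribution of $X_1$, is independent of $(X_{i_2},X_{j_1},X_{j_2})$, and is coupled to $X_{i_1}$ so as to be close to it with high probability. Granting this, the key algebraic point is that the substitution $X_{i_1}\mapsto X_{i_1}'$ annihilates the expectation:
\[
E\big(g(X_{i_1}',X_{j_1})g(X_{i_2},X_{j_2})\big)=E\Big(g(X_{i_2},X_{j_2})\,E\big(g(X_{i_1}',X_{j_1})\mid X_{i_2},X_{j_1},X_{j_2}\big)\Big)=0,
\]
because, by independence and the first-argument degeneracy $E(g(X_1,y))=0$, the inner conditional expectation vanishes. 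Hence, using $|g(X_{i_2},X_{j_2})|\le S$,
\[
\big|E(g(X_{i_1},X_{j_1})g(X_{i_2},X_{j_2}))\big|\le S\,E\big|g(X_{i_1},X_{j_1})-g(X_{i_1}',X_{j_1})\big|,
\]
and everything reduces to estimating the cost of this single replacement.

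The three terms in the asserted bound correspond to the three ingredients of the coupling. To build $X_{i_1}'$ I would first pass to the windowed conditional expectation $\tilde X_{i_1}=E(X_{i_1}\mid Z_{i_1-m},\ldots,Z_{i_1+m})$, whose $L^1$-error is at most $a_m$, so that $P(|X_{i_1}-\tilde X_{i_1}|>\sqrt{a_m})\le\sqrt{a_m}$ by Markov's inequality. Since the $Z$-window of $i_1$ is separated from the windows of $i_2,j_1,j_2$ by at least $m=[k/3]$ coordinates, I would then invoke the Berbee-type coupling of Theorem~\ref{The3} to replace this window by an independent copy, which coincides with the original on an event of probability at least $1-\beta([k/3])$, and finally reconcile the marginal back to that of $X_1$ at an additional $L^1$-cost of order $a_m$. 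On the intersection of these good events $X_{i_1}$ and $X_{i_1}'$ differ by at most a multiple of $\sqrt{a_{[k/3]}}$, while the complementary event has probability at most a multiple of $\sqrt{a_{[k/3]}}+\beta([k/3])$. Splitting $E|g(X_{i_1},X_{j_1})-g(X_{i_1}',X_{j_1})|$ over these events, applying $1$-continuity on the good event (all three arguments then share the marginal of $X_1$) to produce the $\phi$-contribution, and bounding by $2S$ on the bad event to produce the $\sqrt{a_{[k/3]}}$- and $\beta([k/3])$-contributions, one collects the constants into the stated bound $4S\phi(a_{[k/3]})+8S^2(\sqrt{a_{[k/3]}}+\beta([k/3]))$.

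The main obstacle is exactly the coupling in the previous paragraph: one must produce a copy of $X_{i_1}$ that is simultaneously (i) \emph{exactly} distributed as $X_1$, so that both the degeneracy identity and $1$-continuity are legitimately applicable, (ii) independent of the other three variables, and (iii) close to $X_{i_1}$ with controlled probability. This is delicate because $X_{i_1}$ is a two-sided functional of the \emph{entire} sequence $(Z_n)$, so no finite block of $Z$'s determines it exactly; reconciling the $1$-approximation (which perturbs the marginal) with the mixing coupling (which acts on a finite, well-separated block) is what forces the joint appearance of $\sqrt{a_{[k/3]}}$ and $\beta([k/3])$, and the careful good-event/bad-event splitting is where the numerical constants are finally pinned down.
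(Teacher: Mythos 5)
First, note that the paper itself contains no proof of Lemma~\ref{first}: it is quoted verbatim (as Proposition 6.1 of Dehling and Fried, 2012) in the appendix of results from the literature. So your proposal can only be measured against the standard argument behind such bounds, and for the two cases you treat it is exactly that argument: isolate the variable sitting across the largest gap, replace it by an independent copy with marginal distribution $F$, annihilate the decoupled expectation by degeneracy, and charge the replacement to $1$-continuity (via Markov's inequality for the windowed conditional expectation) plus the absolute-regularity coupling of Theorem~\ref{The3}. The annihilation identity $E\bigl(g(X_{i_1}',X_{j_1})g(X_{i_2},X_{j_2})\bigr)=0$ and the separation count $(i_2-m)-(i_1+m)\geq m$ for $m=[k/3]$ are both correct.

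The genuine gap is your case analysis. The lemma defines $k=\max\{i_2-i_1,\,j_1-i_2,\,j_2-j_1\}$, and you treat only $i_2-i_1=k$ and, by symmetry, $j_2-j_1=k$; the case in which the middle gap $j_1-i_2$ is the unique maximum is silently dropped, and it is \emph{not} symmetric to the other two. When only the middle gap is large, no single variable is separated from the remaining three by order $k$, so the only available decoupling is of the pair $(X_{i_1},X_{i_2})$ from the pair $(X_{j_1},X_{j_2})$; but degeneracy is a condition on the univariate marginal and does not annihilate $E\bigl(g(X_{i_1},Y_1)g(X_{i_2},Y_2)\bigr)$ when $(Y_1,Y_2)$ is an independent copy of the \emph{dependent} pair $(X_{j_1},X_{j_2})$. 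In fact no argument can close this case, because the inequality as printed is false there: take $(Z_i)$ i.i.d.\ standard normal, $X_i=(Z_i+Z_{i+1})/\sqrt{2}$ (so $\beta(j)=0$ for $j\geq 1$, and any constants $a_j=\delta>0$, $j\geq1$, are valid approximation constants), and $g(x,y)=\psi(x)\psi(y)$ with $\psi$ the identity truncated at $\pm 1$; then $g$ is bounded, degenerate and $1$-continuous with $\phi(\epsilon)=\epsilon$, yet for $i_2=i_1+1$, $j_1=i_2+k$, $j_2=j_1+1$ and $k\geq 3$ one has $E\bigl(g(X_{i_1},X_{j_1})g(X_{i_2},X_{j_2})\bigr)=\bigl(E[\psi(X_1)\psi(X_2)]\bigr)^2>0$ independent of $k$, while the right-hand side $4S\phi(\delta)+8S^2\sqrt{\delta}$ can be made arbitrarily small. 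What is true, and is all the paper ever uses (see the proof of Lemma~\ref{second}, which invokes this bound with $k=\max\{|i_2-i_1|,|j_2-j_1|\}$), is the version where the maximum runs over the two outer gaps only; for that version your two cases are exhaustive and your scheme is sound. So you have in effect proved the corrected statement, but a complete solution must say this explicitly instead of calling the treated cases ``representative.'' A smaller discrepancy: as you arrange the coupling, $1$-continuity is applied on the event that the coupled variables differ by a multiple of $\sqrt{a_{[k/3]}}$, so your argument yields a term $\phi\bigl(C\sqrt{a_{[k/3]}}\bigr)$ rather than the stated $\phi(a_{[k/3]})$, which for a general modulus $\phi$ is a strictly weaker bound; matching the displayed form requires a different bookkeeping of the threshold.
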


The following two results are useful for proving tightness of a stochastic process. The first one is used to control the fluctuation of maximum. Let $\xi_{1},\ldots,\xi_{n}$ be random variables (stationary or not, independent or not). We denote by  $S_{k}=\xi_{1}+\ldots+\xi_{k}$ ($S_{0}=0$), and put $M_{n}=\max_{0\leq k\leq n}|S_{k}|$.

\begin{theorem}[Theorem 10.2 \cite{B}]\label{Billing}
Suppose that $\beta\geq 0$ and $\alpha>1/2$ and that there exist nonnegative numbers $u_{1},\ldots,u_{n}$ such that for all positive $\lambda$
\begin{equation}
P\left(|S_{j}-S_{i}|\geq \lambda\right)\leq \frac{1}{\lambda^{4\beta}}\left(\sum_{i<l\leq j}u_{l} \right)^{2\alpha},\quad 0\leq i\leq j\leq n\quad,
\end{equation}
then for all positive $\lambda$
\begin{equation}
P\left(M_{n}\geq \lambda\right)\leq \frac{K_{\beta,\alpha}}{\lambda^{4\beta}}\left(\sum_{0<l\leq n}u_{l}   \right)^{2\alpha},
\end{equation}
where $K_{\beta,\alpha}$ is a constant depending only on $\beta$ and $\alpha$.
\end{theorem}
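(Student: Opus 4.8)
The plan is to prove this by a dyadic bisection (chaining) argument applied to the additive interval weight $w(i,j):=\sum_{i<l\le j}u_l$, which is exactly the quantity on the right-hand side of the hypothesis. The case $w(0,n)=0$ is trivial: the hypothesis then forces $S_j=S_i$ a.s.\ for all $i\le j$, so $M_n=0$ a.s.\ and there is nothing to prove. Assume therefore $w:=w(0,n)>0$. The structural fact I would exploit is that $w$ is additive, $w(i,m)=w(i,p)+w(p,m)$ for $i\le p\le m$, which lets me split any block into two sub-blocks each carrying (up to the size of a single atom $u_l$) half of the weight of the parent.

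First I would record the deterministic bisection inequality for $M(i,m):=\max_{i\le k\le m}|S_k-S_i|$: for any split point $p\in[i,m]$,
\[
M(i,m)\le |S_p-S_i|+\max\bigl(M(i,p),M(p,m)\bigr),
\]
which follows by treating $k\le p$ and $k\ge p$ separately and applying the triangle inequality. Iterating this, and at each stage choosing $p$ so as to make $w(i,p)$ and $w(p,m)$ as nearly equal as possible, builds a binary tree of sub-blocks: at level $r$ there are at most $2^r$ blocks, each of weight $\le w\,2^{-r}$, and telescoping the increments along the dyadic path to any index $k$ gives
\[
|S_k|=|S_k-S_0|\le \sum_{r\ge 0}\bigl|S_{p_{r+1}(k)}-S_{p_r(k)}\bigr|,
\]
hence $M_n\le\sum_{r\ge0}D_r$, where $D_r$ is the maximum of $|S_{p'}-S_p|$ over the level-$r$ blocks $(p,p']$.

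Next I would convert this into a tail bound. Splitting the threshold as $\lambda=\sum_{r\ge0}\lambda_r$ with $\lambda_r$ to be chosen, a union bound together with the hypothesis applied to each level-$r$ block yields
\[
P(M_n\ge\lambda)\le\sum_{r\ge0}P(D_r\ge\lambda_r)
\le\sum_{r\ge0}2^{r}\,\lambda_r^{-4\beta}\bigl(w\,2^{-r}\bigr)^{2\alpha}
=w^{2\alpha}\sum_{r\ge0}\lambda_r^{-4\beta}\,2^{r(1-2\alpha)}.
\]
Taking $\lambda_r=(1-\rho)\rho^{r}\lambda$ (so that $\sum_r\lambda_r=\lambda$) for a ratio $\rho\in(0,1)$ chosen close enough to $1$ that $\rho^{-4\beta}2^{1-2\alpha}<1$ — possible precisely because $2\alpha>1$ forces $2^{1-2\alpha}<1$ — turns the sum into a convergent geometric series and gives $P(M_n\ge\lambda)\le K_{\beta,\alpha}\,\lambda^{-4\beta}w^{2\alpha}$ with $K_{\beta,\alpha}=(1-\rho)^{-4\beta}\bigl(1-\rho^{-4\beta}2^{1-2\alpha}\bigr)^{-1}$ depending only on $\beta$ and $\alpha$ (when $\beta=0$ one simply takes any summable $\lambda_r$ with $\sum_r\lambda_r=\lambda$).

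The one genuinely delicate point — and the step I expect to be the main obstacle — is the weight balancing in the bisection: since $w$ is assembled from the atoms $u_l$, a single large $u_l$ can obstruct an even split, so the assertion that each level-$r$ block has weight $\le w\,2^{-r}$ must be justified with care. The standard remedy is to descend the tree only until a block reduces to a single index $l$, where the increment is $|S_l-S_{l-1}|$ of weight exactly $u_l$ and the telescoping terminates; choosing each split point as the largest $p$ with $w(i,p)\le w(i,m)/2$ then bounds each child's weight by $\max(w(i,m)/2,\max_l u_l)$, and the usual bookkeeping shows the geometric estimate survives up to the value of the constant. This is exactly the dyadic argument underlying Billingsley's Theorem~10.2 (and Móricz's maximal inequality), so beyond this careful accounting no new idea is needed.
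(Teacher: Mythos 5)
The paper itself gives no proof of this statement: it is quoted verbatim as Theorem 10.2 of Billingsley (1999) in the appendix of auxiliary results, so your proposal can only be measured against Billingsley's own argument, which takes a different route --- he first proves a maximal inequality for $M_n'=\max_{0\le k\le n}\min(|S_k|,|S_n-S_k|)$ under a bound on the joint tail $P(|S_j-S_i|\ge\lambda,\,|S_k-S_j|\ge\lambda)$ (his Theorem 10.1, by induction on $n$ with a median-type split), and then deduces the present statement via $P(A\cap B)\le[P(A)P(B)]^{1/2}$ and $M_n\le M_n'+|S_n|$. Your direct dyadic chaining is a legitimate alternative (it is essentially M\'oricz's scheme), and its skeleton is correct: the bisection inequality, the telescoping bound $M_n\le\sum_{r\ge0}D_r$, the union bound with thresholds $\lambda_r=(1-\rho)\rho^r\lambda$, and the observation that $\alpha>1/2$ makes $\rho^{-4\beta}2^{1-2\alpha}<1$ achievable for some $\rho\in(0,1)$, yielding a constant depending only on $\alpha,\beta$.

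The gap is precisely at the step you yourself flag as delicate, and the remedy you sketch does not close it. With your split rule ($p$ the largest index with $w(i,p)\le w(i,m)/2$, children $(i,p]$ and $(p,m]$), the right child's weight is bounded only by $w(i,m)/2+u_{p+1}$, not by $\max\bigl(w(i,m)/2,\max_l u_l\bigr)$ as you assert: take $u_1=3$, $u_2=8$, $u_3=9$, so $w=20$; then $p=1$ and the right child has weight $17$, exceeding $\max(10,9)=10$. Worse, when a single atom carries more than half the parent's weight the recursion can make no progress at all: if all of $w$ sits in the first atom $u_{i+1}$, your rule gives $p=i$, so the left child is empty and the right child $(p,m]=(i,m]$ \emph{is} the parent block --- an infinite regress in which the claim ``each level-$r$ block has weight $\le w\,2^{-r}$'' is simply false, and with it the geometric series that drives the whole estimate. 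Your termination criterion (stop at single indices) never kicks in here, since the offending block contains many indices. The standard repair --- the same device used by Billingsley and M\'oricz --- is a trisection rather than a bisection: extract the straddling atom as a singleton, decomposing $(i,m]$ into $(i,p]$, $(p,p+1]$ and $(p+1,m]$, and use $M(i,m)\le|S_p-S_i|+|S_{p+1}-S_p|+\max\bigl(M(i,p),M(p+1,m)\bigr)$. Then both children genuinely have weight $\le w(i,m)/2$ and are strictly shorter intervals (so the recursion terminates), while the singleton increments --- at most $2^r$ at level $r$, each of weight $\le w\,2^{-r}$ --- satisfy the same tail bound as the left-child increments and are absorbed into the union bound at the cost of a factor $2$ in the constant. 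With that modification the rest of your argument goes through verbatim; without it, the key weight estimate on which everything rests is wrong.
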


\begin{theorem}[Theorem 8.4 \cite{B}]\label{Billing1}
The sequence $\{Y_{n}\}$, defined by
\begin{equation}
Y_{n}(t)=\frac{1}{\sigma\sqrt{n}}S_{[nt]}+(nt-[nt])\frac{1}{\sigma\sqrt{n}}\xi_{[nt]+1}
\end{equation}
is tight if for each $\epsilon>0$ there exist a $\lambda>1$ and a $n_{0}\in\N$ such that for $n\geq n_{0}$
\begin{equation}
P\left(\max_{i\leq n}|S_{k+i}-S_{k}|\geq \lambda\sigma\sqrt{n}\right)\leq\frac{\epsilon}{\lambda^{2}}.
\end{equation}
\end{theorem}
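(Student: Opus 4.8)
The statement is Billingsley's classical tightness criterion for the interpolated partial-sum process, and the plan is to prove it by the standard modulus-of-continuity route. The process $Y_n$ takes values in $C[0,1]$ (it is the piecewise-linear interpolation through the vertices $(j/n,\,S_j/(\sigma\sqrt n))$), so by Prokhorov's theorem together with the Arzel\`a--Ascoli characterization of compactness in $C[0,1]$, tightness of $\{Y_n\}$ is equivalent to (i) tightness of $\{Y_n(0)\}$, which is immediate since $Y_n(0)=0$, and (ii) the oscillation condition: for every $\epsilon,\eta>0$ there exist $\delta\in(0,1)$ and $n_0$ such that
\[
 \frac{1}{\delta}\, P\left(\sup_{t\le s\le t+\delta}|Y_n(s)-Y_n(t)|\ge \epsilon\right)\le \eta,
 \qquad n\ge n_0,
\]
uniformly in $t\in[0,1]$. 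First I would record this reduction (it is exactly the condition already invoked in the proof of Proposition~\ref{invprin}); it is what turns a statement about random functions into a statement about partial sums.

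Second, I would convert the oscillation over a window $[t,t+\delta]$ into a maximal fluctuation of the underlying partial sums. Writing $k=[nt]$, the vertices of the polygonal path lying in $(t,t+\delta]$ are the points $j/n$ with $k< j\le [n(t+\delta)]$, so there are at most $[n\delta]+1$ of them. Since $Y_n$ is piecewise linear, the supremum of $|Y_n(s)-Y_n(t)|$ is attained at a vertex or at the endpoint $t+\delta$, and each such value differs from some $S_{k+i}/(\sigma\sqrt n)$ by at most one increment $\xi/(\sigma\sqrt n)$, which is itself dominated by the maximal fluctuation. Hence there is an absolute constant $C$ with
\[
 \sup_{t\le s\le t+\delta}|Y_n(s)-Y_n(t)|
 \le \frac{C}{\sigma\sqrt n}\,\max_{0\le i\le [n\delta]+1}|S_{k+i}-S_k|,
\]
so that, up to renaming the constants, it suffices to control $P(\max_{0\le i\le [n\delta]+1}|S_{k+i}-S_k|\ge \epsilon\sigma\sqrt n)$.

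The crux is then a careful coordination of the quantifiers, and this is the step I expect to be the main obstacle. The hypothesis does not let me choose the threshold $\lambda$; it hands me one for a prescribed probability budget. So given the target $\epsilon,\eta$ I would first apply the hypothesis with budget $\epsilon_\ast:=\eta\epsilon^2/2$, obtaining a $\lambda_\ast>1$ and an $n_0$ with $P(\max_{i\le N}|S_{k+i}-S_k|\ge \lambda_\ast\sigma\sqrt N)\le \epsilon_\ast/\lambda_\ast^2$ for every $N\ge n_0$ and every $k$ (the uniformity in $k$, built into the hypothesis, is what makes the final bound uniform in the window position $t$). Only then would I fix the window width, choosing $\delta:=\epsilon^2/(2\lambda_\ast^2)\in(0,1)$, so that with $m:=[n\delta]+1\le 2n\delta$ (valid once $n\delta\ge 1$) one has $\lambda_\ast\sigma\sqrt m\le \lambda_\ast\sigma\sqrt{2n\delta}=\epsilon\sigma\sqrt n$. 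This inclusion of levels gives, for $n$ large enough that $m\ge n_0$,
\[
 P\left(\max_{0\le i\le m}|S_{k+i}-S_k|\ge \epsilon\sigma\sqrt n\right)
 \le P\left(\max_{0\le i\le m}|S_{k+i}-S_k|\ge \lambda_\ast\sigma\sqrt m\right)
 \le \frac{\epsilon_\ast}{\lambda_\ast^2}
 = \frac{\eta\,\epsilon^2}{2\lambda_\ast^2}=\eta\,\delta .
\]

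Dividing by $\delta$ yields the oscillation bound with constant $\eta$, uniformly in $t$, and the tightness of $\{Y_n\}$ follows from the reduction in the first paragraph. The only points requiring care beyond this are the floor-function bookkeeping (ensuring $m\le 2n\delta$ for all large $n$ and all $t$, and interpreting the window as $[t,t+\delta]\cap[0,1]$ near the right endpoint, which only removes vertices) and the harmless constant $C$ from the interpolation step; both are routine once the quantifier order above is respected. I note that in the application to Proposition~\ref{invprin} the sequence $(h_1(X_i))$ is stationary, so the uniformity in $k$ is automatic, and the maximal inequality feeding this theorem is supplied by Theorem~\ref{Billing}.
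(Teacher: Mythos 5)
The paper does not prove this statement: it sits in the appendix of auxiliary results quoted from the literature (it is Billingsley's tightness criterion for the polygonal partial-sum process, cited as Theorem~8.4 of \emph{Convergence of Probability Measures}) and is used in the proof of Proposition~\ref{invprin} purely as a black box, so there is no in-paper argument to compare yours against. Judged on its own, your proof is correct, and it is in substance Billingsley's original one: reduce tightness in $C[0,1]$ via the Arzel\`a--Ascoli/Prokhorov characterization to the uniform oscillation condition, dominate the oscillation of the piecewise-linear path over $[t,t+\delta]$ by the maximal fluctuation of the partial sums started at $k=[nt]$ (your interpolation constant can be taken as $C=2$), and then coordinate the quantifiers. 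The crux you single out --- that the hypothesis hands you $\lambda_\ast$ for a prescribed probability budget and only afterwards may $\delta$ be chosen, with the single choice $\delta=\epsilon^2/(2\lambda_\ast^2)$ simultaneously forcing the level inclusion $\lambda_\ast\sigma\sqrt{m}\le\epsilon\sigma\sqrt{n}$ and the budget identity $\epsilon_\ast/\lambda_\ast^2=\eta\delta$ --- is precisely why the hypothesis is cast in the form $\epsilon/\lambda^2$, and you execute it correctly. Two trivial caveats, both of the sort you already flag as bookkeeping: the claim $\delta\in(0,1)$ requires w.l.o.g.\ $\epsilon\le 1$ (harmless, since the oscillation event is monotone in $\epsilon$), and the renaming $\epsilon\mapsto\epsilon/C$ coming from the interpolation step must be carried through the choices of $\epsilon_\ast$ and $\delta$, as you indicate.
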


\bibliographystyle{plain}

\end{document}